\definecolor{hot}{RGB}{65,105,225}
\newcommand{\TeXheadline}[1]{\texorpdfstring{#1}{Lg}}
\newcommand{\vartilde}[1]{\widetilde{#1}}
\newcommand{\RR}{\mathbb{R}}
\newcommand{\CC}{\mathbb{C}}
\newcommand{\PP}{\mathbb{P}}
\newcommand{\VV}{\mathbb{V}}
\newcommand{\xx}{{\boldsymbol{x}}}
\newcommand{\yy}{{\boldsymbol{y}}}
\newcommand{\bfzero}{{\mathbf{0}}}
\DeclareMathOperator{\rank}{rank}
\DeclareMathOperator{\Gr}{Gr}
\newcommand{\rootcount}{\textbf{68\hspace{.07em}774}}
\newcommand{\dimX}{r}
\newcommand{\PPtwo}{\PP^{m}\times \PP^{n}}
\newcommand{\IntersectionWithHypersurface}{{\tt IntersectionWithHypersurface}}
\newcommand{\EliminateRedundantComponents}{{\tt EliminateRedundantComponents}}
\newcommand{\uStartPoints}{{\tt uStartPoints}}
\newcommand{\uMultiprojectiveStartPoints}{{\tt uMultiprojectiveStartPoints}}
\theoremstyle{plain}
\theoremstyle{definition}
\newtheorem{theorem}{Theorem}[section]
\newtheorem{example}{Example}[section]
\newtheorem{proposition}[theorem]{Proposition}
\newtheorem{corollary}[theorem]{Corollary}
\newtheorem{remark}[theorem]{Remark}
\newtheorem{ex}[theorem]{Example}
\newtheorem*{ex*}{Example}
\newcommand{\defcolor}[1]{{\color{cyan}#1}} 
\newcommand{\demph}[1]{\defcolor{{\sl #1}}} 
\definecolor{fcolor}{RGB}{212, 17, 89}
\newcommand{\colorf}[1]{{\color{fcolor}#1}}
\definecolor{gcolor}{RGB}{26, 133, 255}
\newcommand{\colorg}[1]{{\color{gcolor}#1}}
\newcommand{\blue}[1]{{\color{blue}#1}}
\newcommand{\magenta}[1]{{\color{magenta}#1}}
\newcommand{\epsT}{t}
\title{$u$-generation: solving systems of polynomials equation-by-equation}
\author{Timothy Duff}\thanks{T.D. acknowledges support from an NSF Mathematical Sciences Postdoctoral Research Fellowship (DMS-2103310)}
\address{Department of Mathematics, University of Washington, Box 354350,
Seattle, Washington 98195–4350}
\email {timduff@uw.edu}\urladdr{\url{https://timduff35.github.io/timduff35/}}
\author{Anton Leykin}\thanks{Research of A.L. is supported in part by NSF DMS-2001267.}
\address{School of Mathematics, Georgia Institute of Technology, 686 Cherry Street NW, Atlanta GA 30308}
\email {anton.leykin@gmail.com}\urladdr{\url{https://antonleykin.math.gatech.edu/index.html}}
\author{Jose Israel Rodriguez}\thanks{ Research of J.I.R. is supported by the Office of the Vice Chancellor for Research and Graduate Education at U.W. Madison with funding from the Wisconsin Alumni Research Foundation.}
\address{Department of Mathematics,         University of Wisconsin-Madison,  480 Lincoln Drive, Madison WI 53706-1388, USA.}
\email {jose@math.wisc.edu}\urladdr{\url{https://sites.google.com/wisc.edu/jose/}}
\thanks{All authors thank the Institute for Mathematics and its Applications in Minneapolis for hosting them for a series of ``SageMath and Macaulay2'' meetings in 2019-2020 where the ideas of this work were born.}
\keywords{Solving polynomials, numerical algebraic geometry, regeneration.} 
\subjclass[2020]{65H20,4Q65,68W30}
\begin{document}


\begin{abstract}  
We develop a new method that improves the efficiency of equation-by-equation algorithms for solving polynomial systems.
Our method is based on a novel geometric construction, and reduces the total number of homotopy paths that must be numerically continued. 
These improvements may be applied to the basic algorithms of numerical algebraic geometry in the settings of both projective and multiprojective varieties.
Our computational experiments demonstrate significant savings obtained on several benchmark systems.
We also present an extended case study on maximum likelihood estimation for rank-constrained symmetric $n\times n$ matrices, in which multiprojective $u$-generation allows us to complete the list of ML degrees for $n\le 6.$
\end{abstract}

\maketitle

Consider a system of homogeneous polynomial equations defining a complex projective variety, also known as the solution set of the system. This is a set of points in the complex projective space that give zero value to every polynomial in the system. Describing this variety is a basic task in computational algebraic geometry and is the primary meaning of \emph{solving} the system.

The machinery of \emph{numerical} algebraic geometry provides a description in terms of \emph{witness sets} (see \Cref{sec:witness-sets}) of equidimensional pieces of the solution set. 
We aim to improve the class of \emph{equation-by-equation} solvers that accomplish this task.

As the name suggests, given a description of the solution set of a system  such algorithms proceed by appending a new polynomial to the system, on one hand, and producing a description of the solution set for the new system, on the other. Geometrically speaking, the new variety is obtained as the intersection of the old variety and the hypersurface where the new polynomial vanishes.
\thispagestyle{empty}
\section{Introduction}

\subsection{History of equation-by-equation solvers}

The approach of~\cite{SV:generic-points-on-algebraic-sets} modifies polynomials in the system by adding linear terms in a set of new ``slack variables'' producing ``embedded systems''. They develop a ``cascade of homotopies between embedded systems'' which, historically, may be considered the first practical equation-by-equation solver in the framework of numerical algebraic geometry.   


The algorithm of~\cite{SVW:equation-by-equation} also introduces new variables at each step of a different equation-by-equation cascade based on \emph{diagonal homotopies}. The number of additional variables in both this method and the method of~\cite{SV:generic-points-on-algebraic-sets} equals at least the dimension of the solution set.  

The ingenuity of the \emph{regeneration}~\cite{HSW:regeneration,HSW:regenerative-cascade,HW:unification} approach stems from a realization that at each step of the cascade, when considering a new polynomial of degree $d$, one can ``replace'' it with a product of random $d$ linear forms. This results in a {\bf two-stage} procedure that, first, precomputes $d$ copies of witness sets corresponding to the linear factors and, second, deforms the union of $d$ hyperplanes into a hypersurface given by the original polynomial. The \emph{homotopy} realizing this deformation describes a family of 0-dimensional varieties in $\PP^n$ --- no extra variables are introduced.

\subsection{Contributions and outline}
We develop a new {\bf one-stage} procedure dubbed \emph{$u$-generation} for a step in the equation-by-equation cascade.
The core is a geometric construction that relies on a homotopy in $\PP^{n+1}$, thus introducing {\bf one} new variable~$u$. In fact, this new variable can be eliminated when implementing the method (see \Cref{rem:eliminate-u}) and then it performs similarly to the second stage of regeneration, thus saving the cost of performing the first stage. 

The rest of the paper is outlined as follows.
\Cref{sec:projective} provides necessary background on witness sets, introduces $u$-generation, and --- for completeness --- outlines a simple algorithm for an equation-by-equation cascade.  
In \Cref{sec:multiprojective} we extend our approach, albeit in a nontrivial way, to multihomogeneous systems: homotopies are transplanted from $\PP^{n_1}\times\dots\times\PP^{n_k}$ to $\PP^{n_1+1}\times\dots\times\PP^{n_k+1}$.   
\Cref{sec:experiments} describes the results of several computational experiments, comparing $u$-generation to regeneration.
In~\ref{subsec:u-Pn}, we apply the methods of~\Cref{sec:projective} to several benchmark problems, demonstrating potential savings brought by $u$-generation.
In~\ref{subsec:MLE}, we demonstrate how $u$-generation in the multiprojective setting may be applied to solve nontrivial problems in maximum-likelihood estimation.
\Cref{sec:conclusion} provides short a conclusion. 


\section{\TeXheadline{$u$}-generation in \TeXheadline{$\PP^n$}}\label{sec:projective}

\subsection{The homotopy}\label{ss:homotopy}
Let $X$ be a closed subvariety of complex projective space $\PP^{n}.$ 
A straight-line homotopy on $X$ has the form
\begin{equation}
\label{eq:Ht-Pn}
H_t = (1-t)F + tG,\ t\in[0,1],
\end{equation}
where $F$ and $G$ consist of $\dimX $-many homogeneous polynomials of matching degrees in indeterminates $[x_0:x_1:\cdots : x_n]$.
We abbreviate the straight-line homotopy~\eqref{eq:Ht-Pn} by $F\leadsto G$.
Given that $\VV (H_t) \cap X$ is a finite set, for $t\in [0,1],$ one may consider homotopy paths emanating from \emph{start points} $\VV(F) \cap X.$
A typical application of numerical homotopy continuation is to track these paths  
in an attempt to compute the \emph{endpoints} $\VV (G) \cap X.$\footnote{In this paper we shall gloss over many details of how homotopy tracking is accomplished in practice and issues arising from the need to use approximations of points.
For this we refer the reader to introductory chapters of \cite{SW05}.
}

Suppose now that $X$ is a curve which is a component or a union of one-dimensional components of $\VV(F)\subset \PP^n.$ 
Consider the cone $\vartilde X \subset \PP^{n+1}$ with coordinates $[u:\xx]=[u:x_0:x_1:\dots:x_n]$. 
In the chart $u=1$, this is the affine cone over $X,$ and 
\begin{equation}\label{eq:tilde-X}
    \vartilde X = \overline{\{ [u:\xx] \mid \xx \in X \} } 
= \{[u:\xx] \mid \xx \in X \} \cup \{ [1:0:\cdots : 0]\}.
\end{equation}
Given $g_1 \in \CC[\xx]_d$ (homogeneous of degree $d$) and $\ell\in\CC[\xx]_1$ we consider a homotopy 
\begin{equation}
\label{eq:Htwave-Pn}
\vartilde H_t : (g_0,\ell)
\leadsto
(g_1,u)\quad \text{ on }\vartilde X,
\end{equation}
where $g_0 \in \CC[u,\xx]_d$.

\begin{proposition}\label{prop:endpoint}
For generic $g_0\in \CC[u,\xx]_d$ and $\ell_0 \in \CC[\xx]_1$, the cardinality of points on $\vartilde X$ satisfying $\vartilde H_t$ is equal to $d\cdot \deg X$ for $t\in[0,1)$, 
where $\deg X$ denotes the degree of the projective variety $X$.

The start points of the homotopy $\vartilde H_t$ are
$$
\VV(\vartilde H_0) \cap \vartilde X = \{ [u:\xx] \mid [\xx] \in X\cap\VV(\ell), \text{ and } u \text{ satisfies } g_0(u,\xx) = 0 \}.
$$
The endpoints of $\vartilde H_t$ lie in the set
$$\{ [0:\xx] \mid [\xx] \in X\cap\VV(g_1)\}.$$
In the case when this set is finite, every point is reached.

\end{proposition}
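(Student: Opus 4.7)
The plan is to exhibit the incidence curve of the homotopy inside $\vartilde X \times \PP^1$ and combine a B\'ezout count on the cone $\vartilde X$ with standard finite-morphism-of-curves arguments. I would first record that $\vartilde X \subset \PP^{n+1}$ is the projective cone over $X$ with vertex $[1:0:\cdots:0]$, hence $2$-dimensional of degree $\deg X$, so the two polynomials of $\vartilde H_t$ are the correct codimension to cut out a $0$-dimensional subscheme of $\vartilde X$. For the start points at $t=0$, I would observe that $\VV(\ell) \subset \PP^{n+1}$ contains the vertex, whence $\vartilde X \cap \VV(\ell)$ is the cone over $X \cap \VV(\ell)$; for generic $\ell \in \CC[\xx]_1$ this is a union of $\deg X$ distinct projective lines through the vertex, each indexed by a point $[\xx_i] \in X \cap \VV(\ell)$. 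On each such line, $g_0(u, \xx_i)$ restricts to a univariate polynomial of degree $d$ in $u$, which for generic $g_0$ has $d$ distinct roots, yielding the $d \cdot \deg X$ start points in the claimed form.

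To extend the count $d \cdot \deg X$ to all $t \in [0,1)$, I would invoke B\'ezout's theorem on $\vartilde X$: for all but finitely many $[s:t] \in \PP^1$, the fiber $\vartilde X \cap \VV(sg_0 + tg_1) \cap \VV(s\ell + tu)$ consists of $d \cdot \deg X$ reduced points, and a Bertini-type argument together with freedom in the choice of $(g_0, \ell)$ (or a $\gamma$-twist of the homotopy) confines the exceptional $t$-values to a finite subset of $\PP^1$ disjoint from $[0,1)$. The inclusion $\VV(\vartilde H_1) \cap \vartilde X \subseteq \{[0:\xx] \mid [\xx] \in X \cap \VV(g_1)\}$ is then immediate from the form of $\vartilde H_1 = (g_1, u)$.

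For the reverse inclusion under the finiteness hypothesis, I would form the incidence variety $Z \subset \vartilde X \times \PP^1$ cut out by the homogenized equations $sg_0 + tg_1 = 0$ and $s\ell + tu = 0$, with projection $\pi \colon Z \to \PP^1$. A prescribed endpoint $([0:\xx_0], [0:1])$ cannot lie on a component of $Z$ with positive-dimensional fiber over $[0:1]$, since the $[0:1]$-fiber is $0$-dimensional by hypothesis; hence it lies on some $1$-dimensional component $C$. Such a $C$ must dominate $\PP^1$, for otherwise $C \subseteq \pi^{-1}([0:1])$ would contradict $\dim C = 1$ against the $0$-dimensional $[0:1]$-fiber. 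Surjectivity of $\pi|_C$ then shows that $[0:\xx_0]$ is the limit as $t \to 1$ of points of $C$ over generic $t$, i.e.\ the endpoint of a homotopy path. I anticipate the main obstacle to be precisely this dimension-juggling at $t=1$: because $g_1$ is fixed and possibly special, the usual genericity is unavailable there, and one must combine the finiteness hypothesis with the structure of $Z$ (e.g.\ via flatness / Cohen--Macaulay-ness over $\PP^1$) to complete the argument.
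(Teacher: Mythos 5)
Your route is genuinely different from the paper's: you set up the incidence curve $Z$ over the pencil $\PP^1$ spanned by $(g_0,\ell)$ and $(g_1,u)$ and argue with one-dimensional components of $Z$, whereas the paper forms the incidence correspondence over the \emph{full} coefficient space $\CC[u,\xx]_d\times\CC[u,\xx]_1$, uses the branched-covering structure away from the exceptional set $\Sigma$, and runs a local lifting/biholomorphism argument near the prescribed endpoint (or, in one line, invokes the parameter continuation theorem). Your treatment of the start points, of the count $d\cdot\deg X$ for $t\in[0,1)$ (your ``Bertini-type argument plus freedom in the choice of $(g_0,\ell)$'' plays exactly the role of the standard lemma the paper cites), and of the inclusion of endpoints in $\{[0:\xx]\mid [\xx]\in X\cap\VV(g_1)\}$ are fine, as is the reduction to a one-dimensional component $C\subset Z$ dominating $\PP^1$.

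The gap is the last step. Surjectivity of $\pi|_C$ only gives that every fiber of $C$ is nonempty; it does not show that the \emph{given} point $p=([0:\xx_0],[0:1])$ is a limit of solutions lying over the real segment $t\in[0,1)$, which is what ``endpoint of a homotopy path'' means. (Density of $C\setminus\pi^{-1}([0:1])$ in $C$ gives approach by points over nearby \emph{complex} parameters, but not approach along the segment.) What you need is openness of $\pi|_C$ at $p$: every analytic branch of $C$ through $p$ maps non-constantly to $\PP^1$ --- a branch on which $\pi$ is constant would be a one-dimensional subset of the finite fiber over $[0:1]$ --- hence maps openly, so every neighborhood of $p$ contains points of $Z$ over all real $t$ sufficiently close to $1$. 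You must then still identify these solutions with points on tracked paths: since the segment avoids the discriminant for $t\in[0,1)$, the solution set over each such $t$ is exactly the $d\cdot\deg X$ points on the homotopy paths, and a pigeonhole over the finitely many paths (together with connectedness of a path's limit set inside the finite fiber at $t=1$) shows that some path converges to $p$. Your closing suggestion of flatness/Cohen--Macaulayness over $\PP^1$ is one legitimate way to formalize the openness (finite flat maps are open), but as written the proposal stops precisely where the substance of the paper's self-contained argument lies, as you yourself acknowledge in the final sentence.
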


\begin{proof}
Consider the exceptional set 
\begin{align*}
\Sigma =\{(g, \ell) \in \CC [u, \xx]_d \times \CC [u, \xx]_1 \  \mid\  |\vartilde X \cap \VV (g, \ell )| \neq d\cdot\deg X \} 
\end{align*}
in the affine space of all coefficients of pairs of polynomials in $\CC [u, \xx]_d \times \CC [u, \xx]_1$.
Genericity of $\ell_0$ implies $X \cap \VV (\ell_0)$ contains $\deg X$ points, and so long as the coefficients of $g_0$ lie outside of $(\deg X)$-many hypersurfaces in $\CC^{N}$ we have $(g_0, \ell_0)\notin \Sigma .$
It follows by the usual argument~\cite[Lemma 7.12]{SW05} that the real segment $(1-t)(g_0, \ell) + t(g_1, u)$ for all $t\in [0, 1)$ is also disjoint from $\Sigma .$ 
The description of the start points and where the endpoints lie is clear from the definition of $\vartilde H_t.$

Our last claim follows from the parameter continuation theorem~\cite[Theorem 7.1.6]{SW05}. We give an alternative elementary self-contained proof below.

Consider the incidence correspondence
\[
\Gamma  = \{ \left( (g, \ell), [\vartilde u : \vartilde \xx] \right) \subset \CC [u, \xx]_d \times \CC [u, \xx]_1 \times \vartilde X \mid g([\vartilde u :\vartilde \xx ] )= \ell ([ \vartilde u : \vartilde \xx ]) = 0 \},
\]
equipped with the projection $\pi : \Gamma \to \CC [u, \xx]_d \times \CC [u, \xx]_1 .$
The map $\pi$ is a branched covering: restricted to the preimage of the complement of $\Sigma$, it is a topological covering map both in usual (complex) and Zariski topology.

Suppose the fiber $\pi^{-1} (g_1, u)$ is finite.
Consider a point $$p = \left((g_1, u),[0:\xx]\right)\in \pi^{-1} (g_1, u)$$ and let $V$ be an open neighborhood of $p$ in $\Gamma$ (with the usual topology) containing no other points of $\pi^{-1} (g_1, u)$ in its closure.  
Notice that $$\dim V =\dim \Gamma = \dim\left(\CC [u, \xx]_d \times \CC [u, \xx]_1\right) $$ and $U = \pi (V) \setminus \Sigma $ is a nonempty open subset of $\CC [u, \xx]_d \times \CC [u, \xx]_1$ with $\pi(p)$ in the interior of $\overline U$.
The map $\pi $ restricted to  $\pi^{-1} (U) \cap V$ is a biholomorphism onto $U.$
Since $\pi(p)=(g_1, u)$ is in the interior of $\overline{U},$ the segment $ (1-t) (g_0, \ell ) + t (g_1, u)$ intersects $U$ for values of $t$ arbitrarily close to $1.$
Let $(g_t, \ell_t) \to (g_1, u)$ along the set of points where this segment intersects $U.$
Lifting to $\pi^{-1} (U) \cap V,$ we obtain $\left((g_t, \ell_t),  [\vartilde u_t : \vartilde \xx_t] \right)
\to \left( (g_1, u) , [\vartilde u : \vartilde \xx ]\right) \in \pi^{-1} (g_1, u).$
Since $[\vartilde u:\vartilde \xx ]$ is an endpoint of $\vartilde H_t,$ we must have $\vartilde u = 0.$
Moreover, since $\pi^{-1} (g_1, u) \cap \overline V = \{p\}$, we must have $[\vartilde u : \vartilde \xx ]  = [ 0: \xx ].$
\end{proof}

\begin{ex} 
\label{ex:parabolas}
\Cref{fig:u-illustration} illustrates the homotopy $\vartilde H_t$ \eqref{eq:Htwave-Pn} for a simple case: intersecting two parabolas in the plane.
We intersect $X=\VV(F)\subset \PP^2,$ where 
\[
F (\xx) = \colorf{x_1^2 - x_0 x_2 - 2x_0^2},
\]
with $\VV (g_1) \subset \PP^2,$ where
\[
g_1 (\xx) = \colorg{2 x_0^2 + x_1 x_0 - x_2^2}.
\]
Choosing $\ell_0 (\xx) = x_2$ and $g_0 (u, \xx) = x_0^2 - u^2$ (cf.~\Cref{rem:g0-Pn}), the genericity conditions required in~\Cref{prop:endpoint} are satisfied.
The start points are the four points in 
$\VV({F}) \cap \VV ({g_0}) \cap \VV (\ell_0)\subset\PP^3.$
The four endpoints of $\vartilde H_t$ in $\PP^3$ are given in homogeneous coordinates $[u: x_0:x_1:x_2]$ as follows:
$$
\begin{array}{c|c|c|c}
u& x_0 &     x_1 &      x_2 \\
\hline
0 &  1 &      -1 &       -1 \\
0 &  1 &       2 &        2 \\
0 &  1 &   -\phi & \phi - 1 \\
0 &  1 & \phi -1 &    -\phi 
\end{array}
$$
where $\phi = (1+\sqrt{5})/2.$
The coordinates $[x_0:x_1:x_2]$ give us the four points of intersection in $\VV (\colorf{F}) \cap \VV (\colorg{g_1}).$

\begin{figure}
    \centering
    \begin{tabular}{m{8.3em} m{8.3em} m{8.3em} m{8.3em}} 
         \begin{tabular}{c}
            \\
            \includegraphics[width=8.3em]{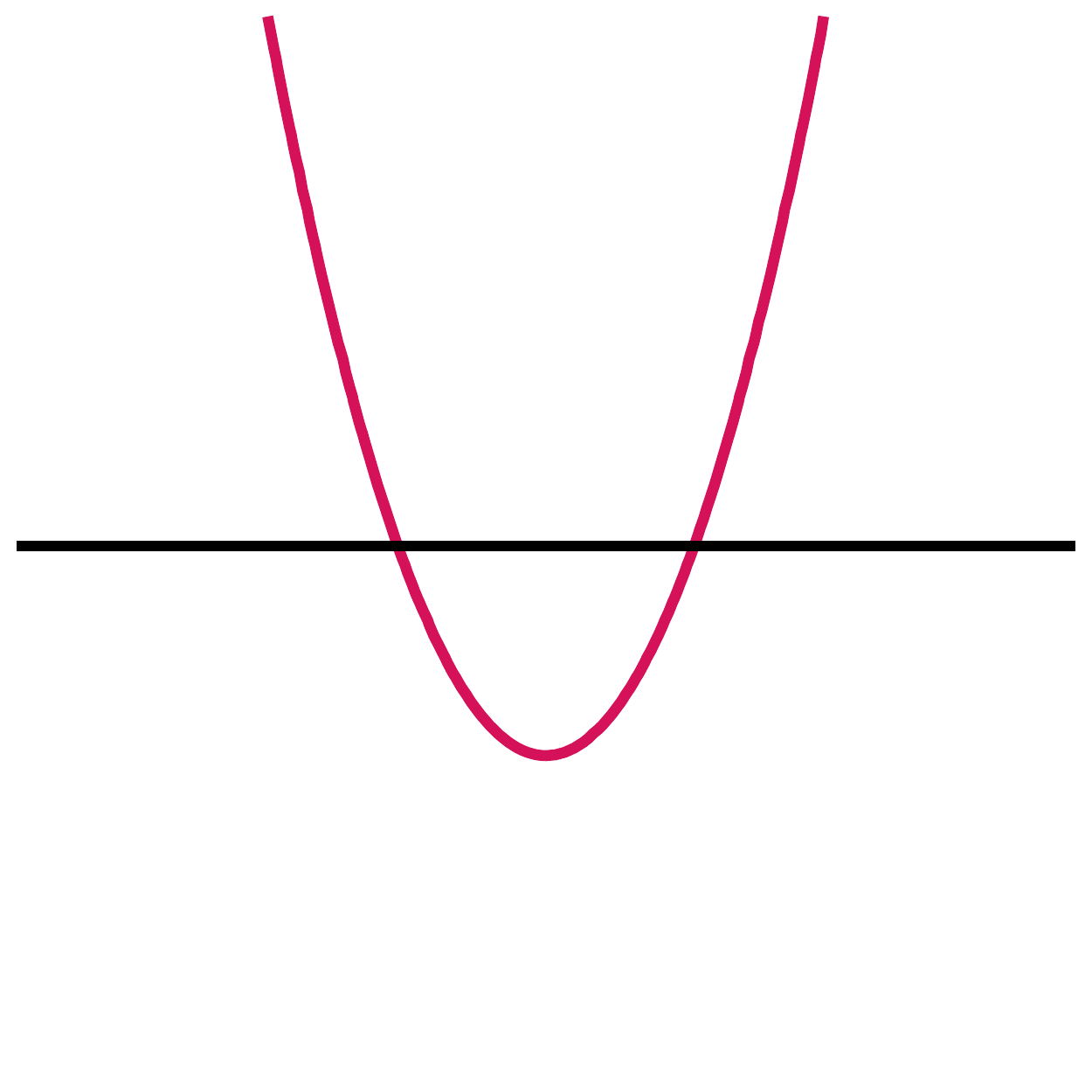}
        \end{tabular}
    &
         \begin{tabular}{c}
            $t=0$\\
            \includegraphics[width=8.3em]{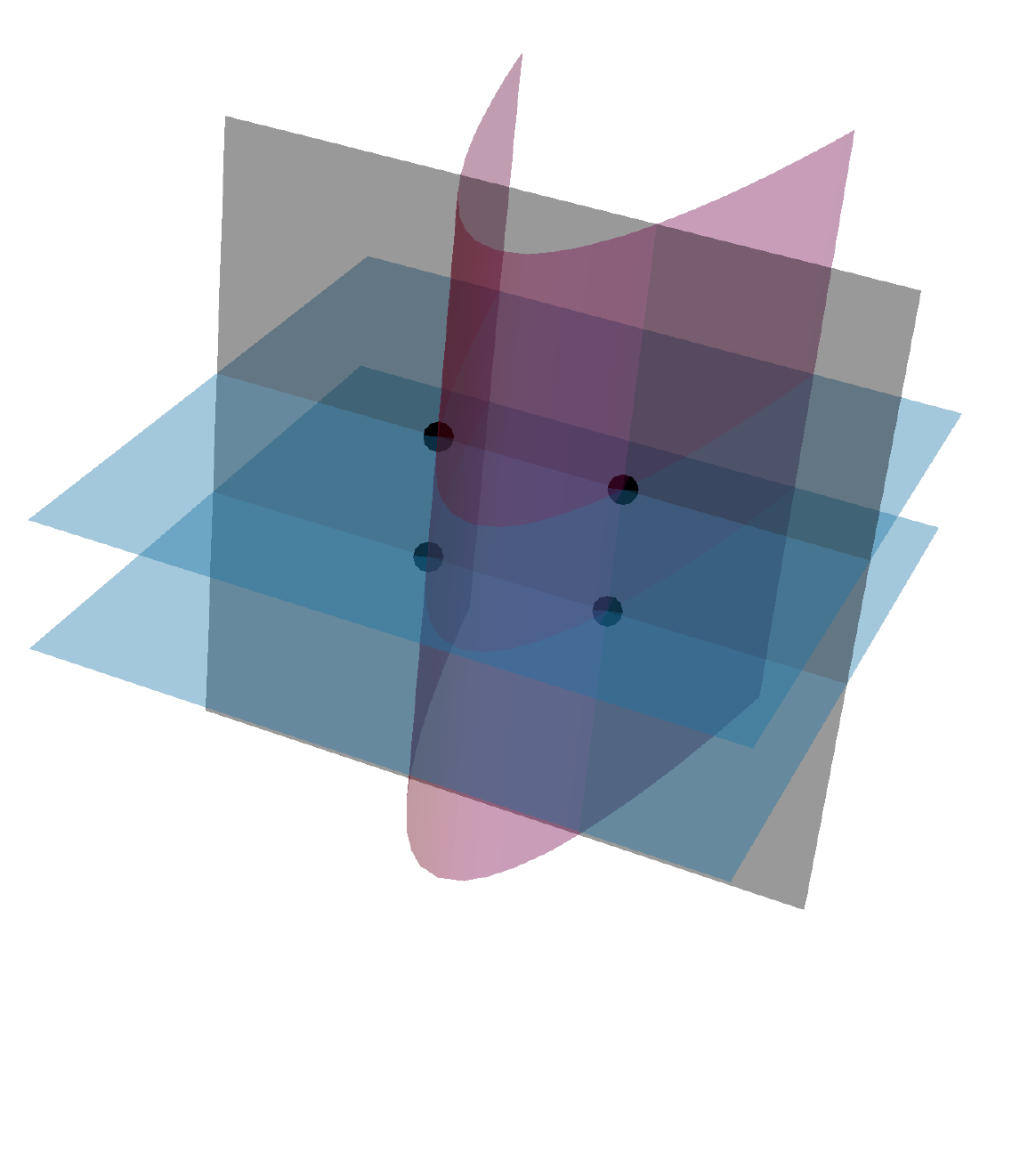} 
        \end{tabular}
    &
         \begin{tabular}{c}
            $t=.2$\\
            \includegraphics[width=8.3em]{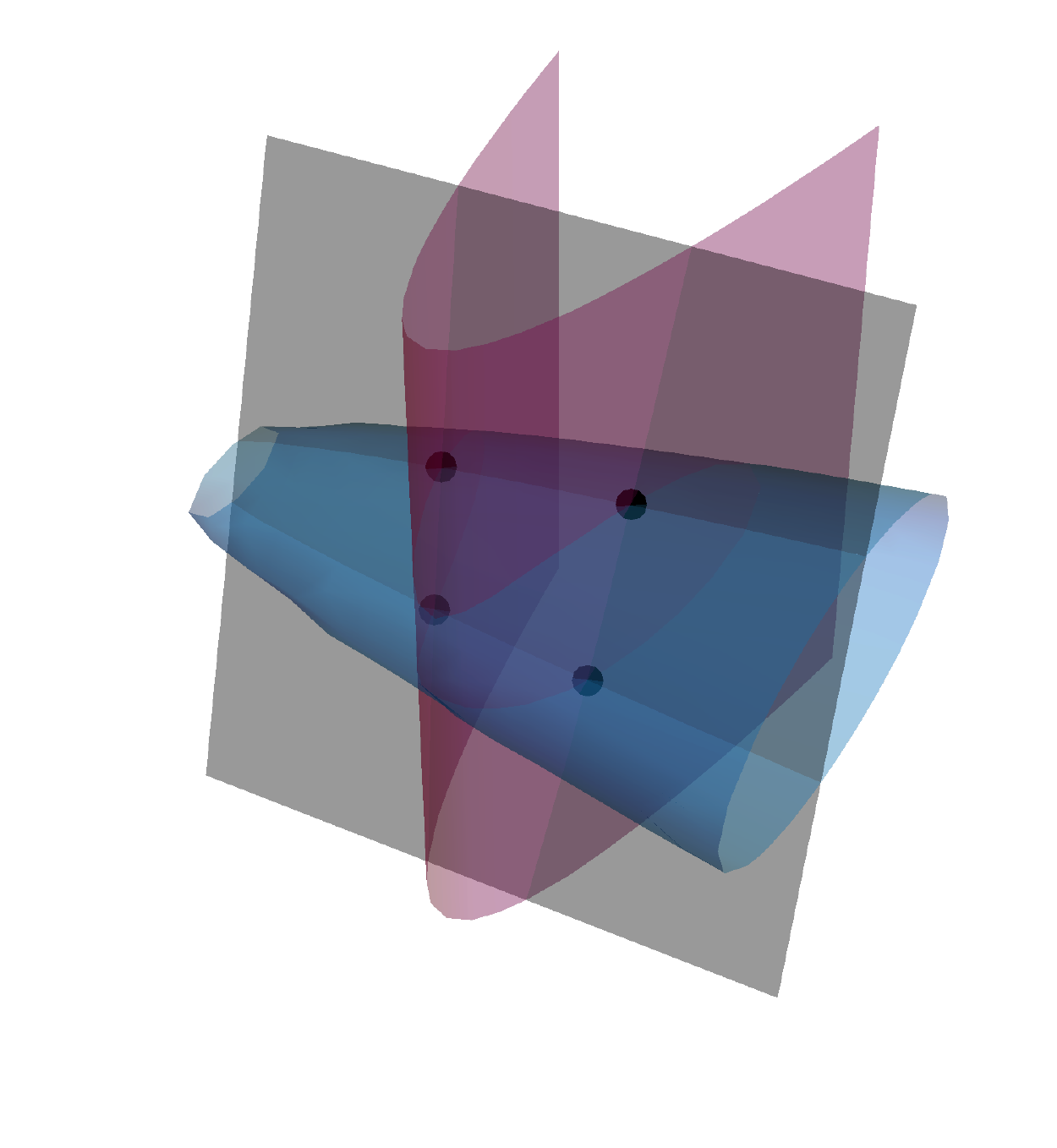}  
        \end{tabular}
    &
         \begin{tabular}{c}
            $t=.4$\\
           \includegraphics[width=8.3em]{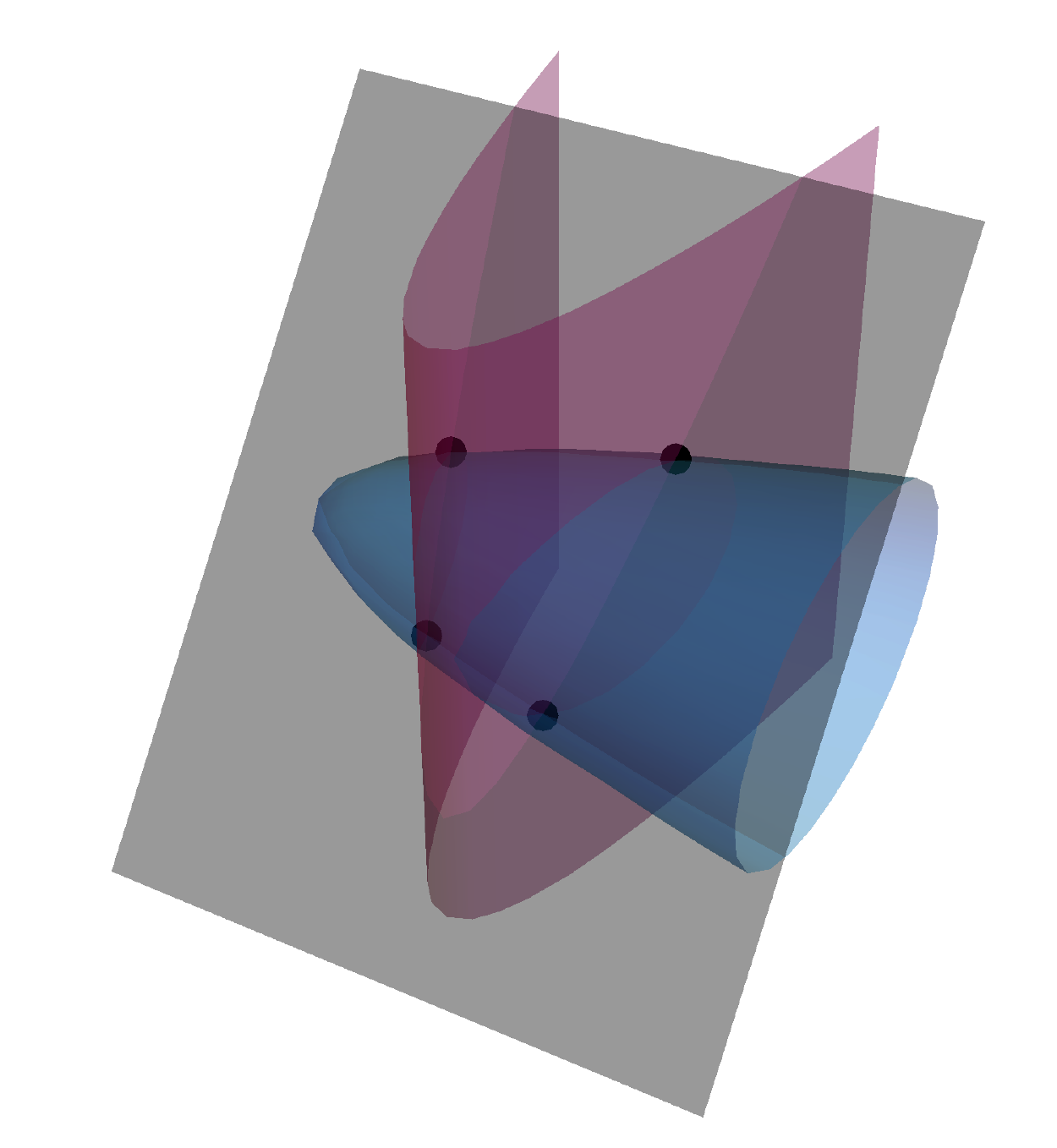} 
        \end{tabular}
    \\
        \begin{tabular}{c}
            $t=.6$\\
            \includegraphics[width=8.3em]{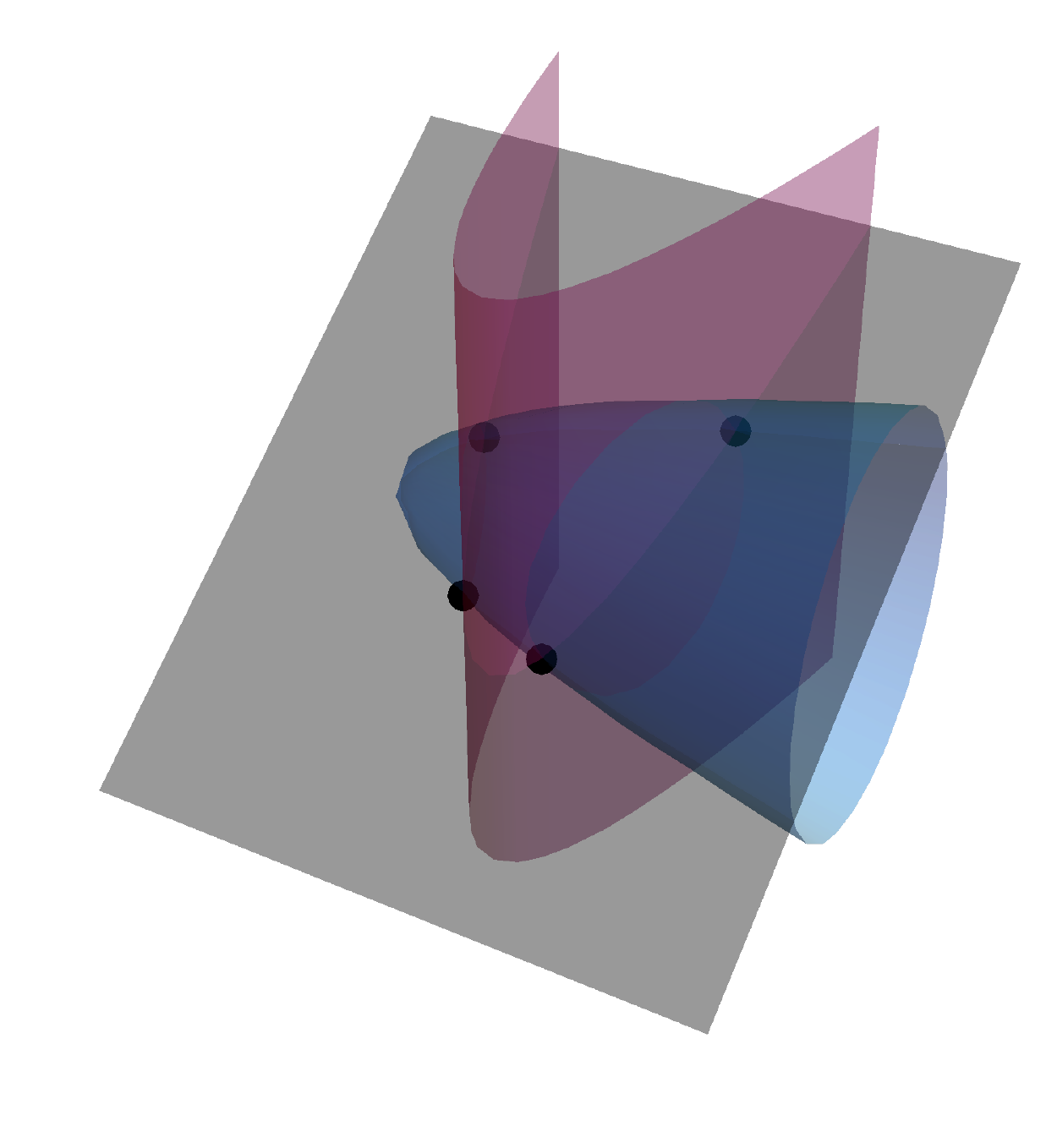}
        \end{tabular}
     &
        \begin{tabular}{c}
            $t=.8$\\
            \includegraphics[width=8.3em]{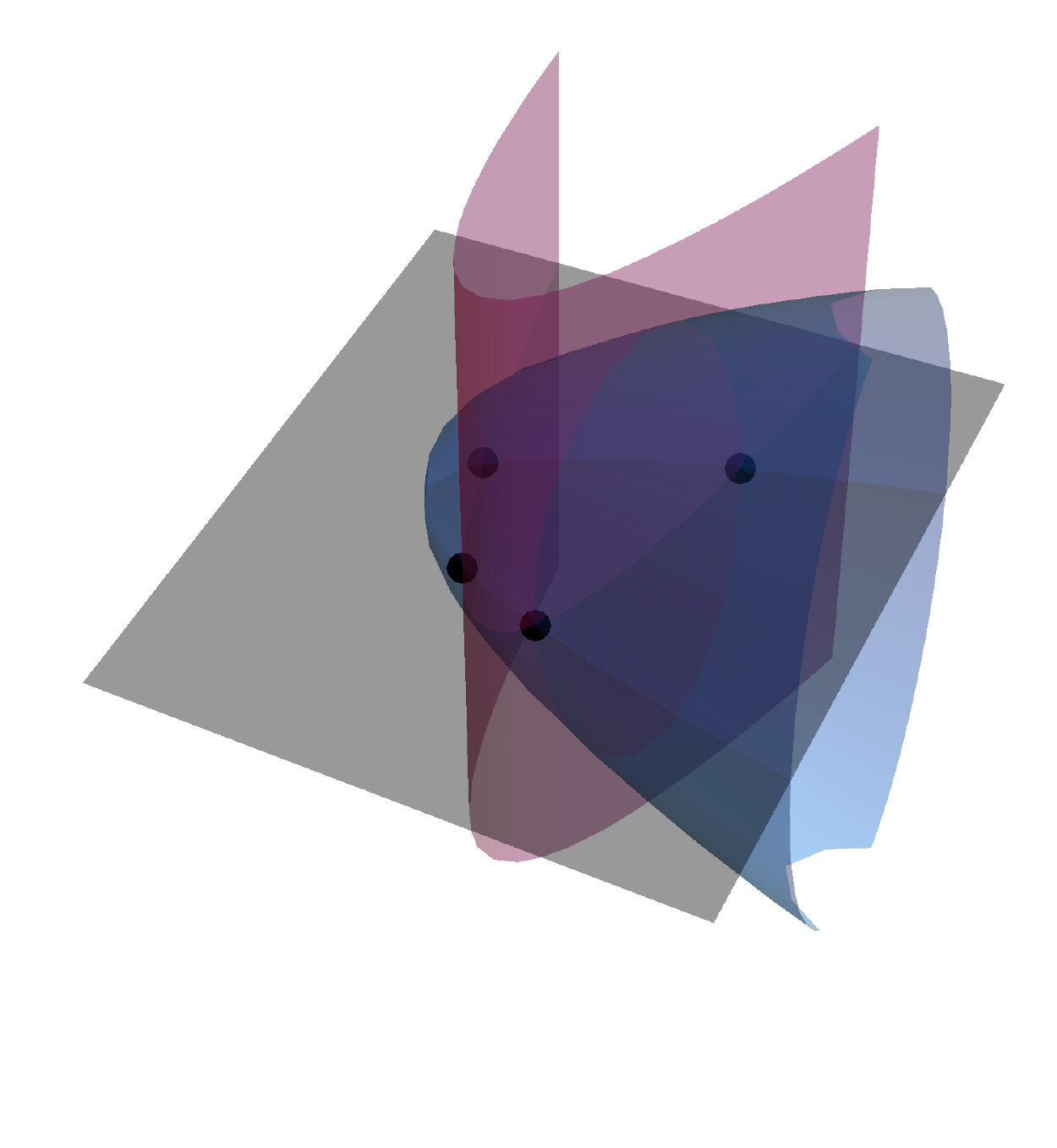}
        \end{tabular}
    &  
        \begin{tabular}{c}
            $t=1.0$\\
            \includegraphics[width=8.3em]{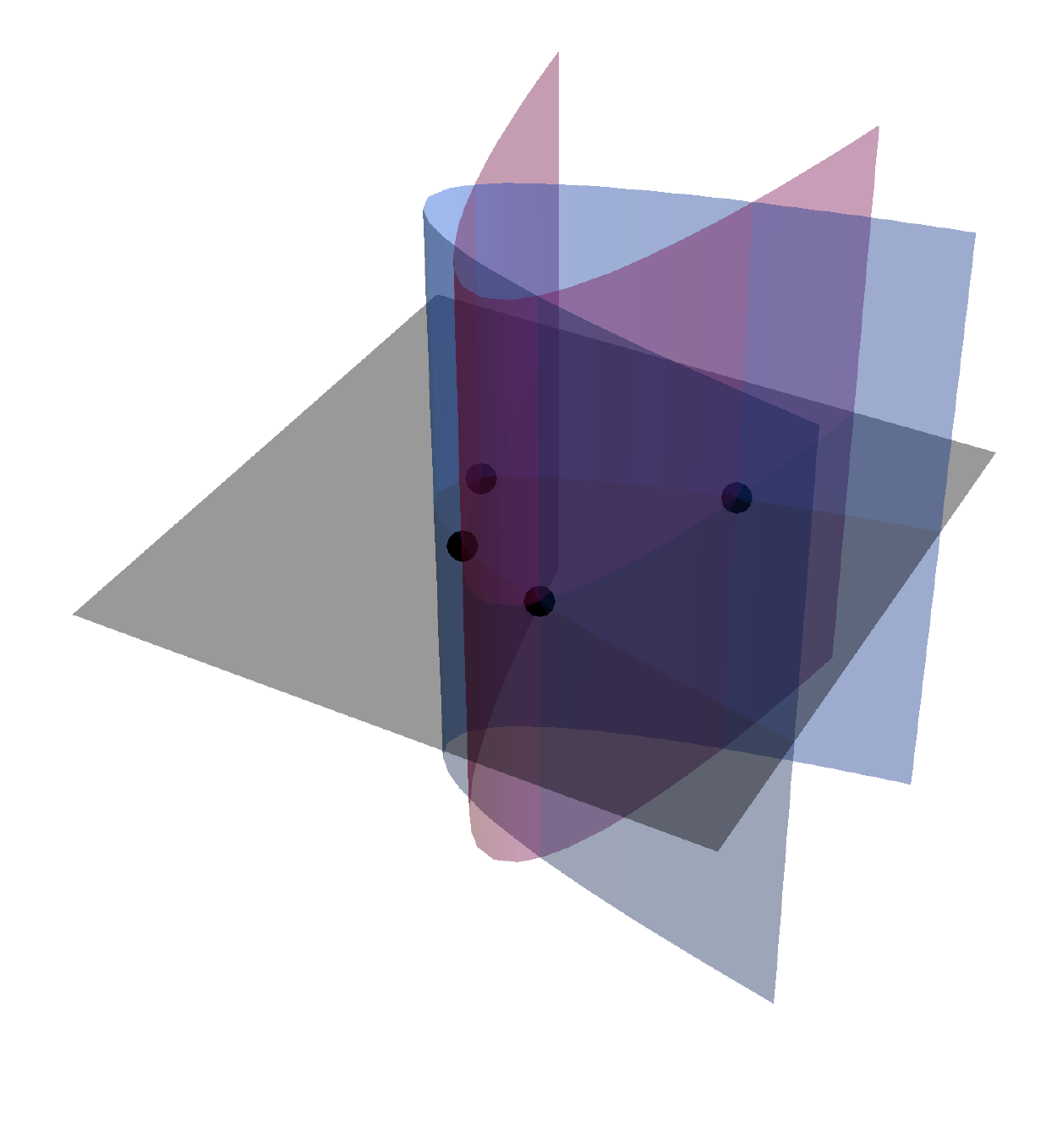} 
        \end{tabular}
    &
        \begin{tabular}{c}
            \\
            \includegraphics[width=8.3em]{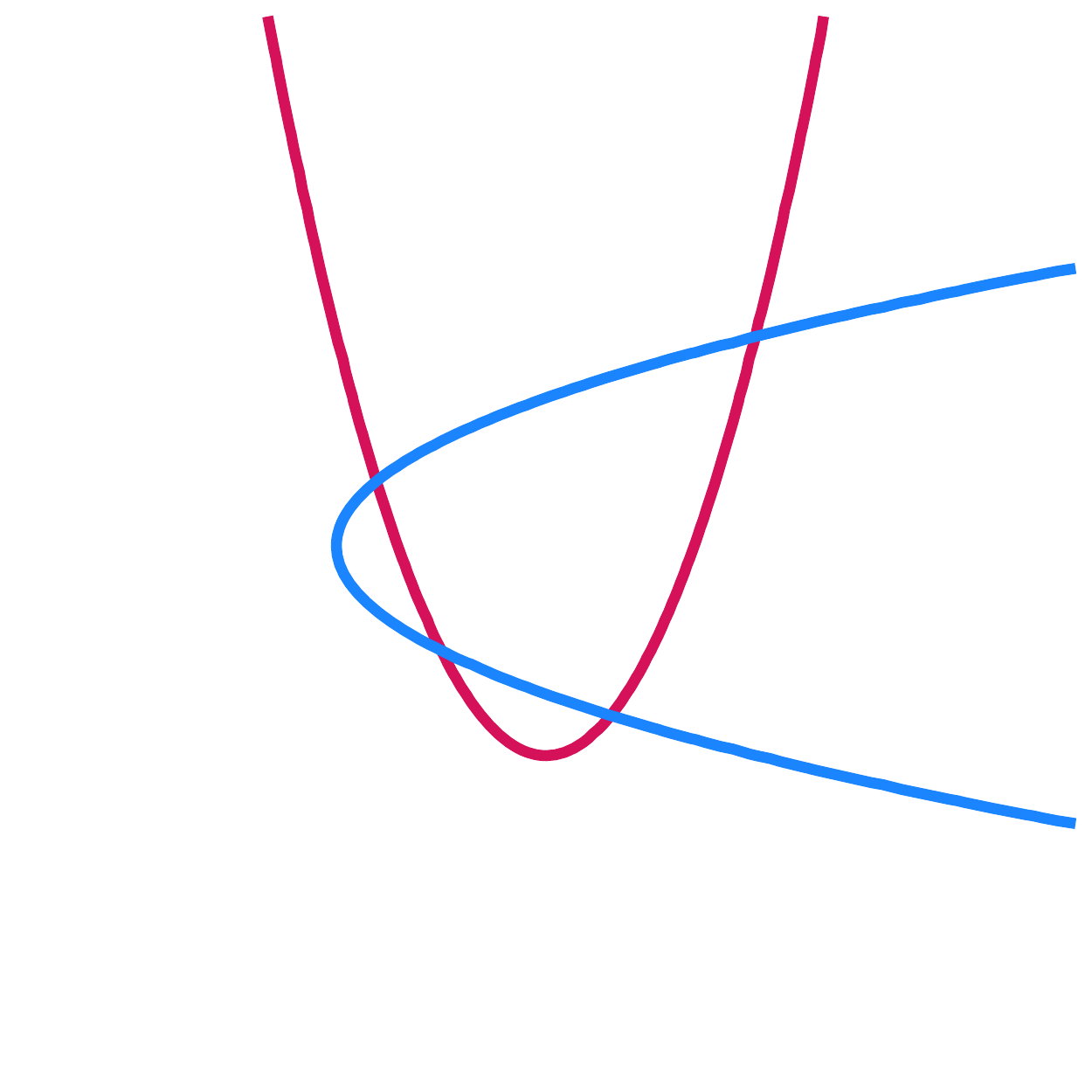}
        \end{tabular}
\end{tabular}
    
    \caption{Illustration of~\Cref{ex:parabolas} in the affine charts $x_0=1.$
    The homotopy $\vartilde H_t$ deforms a union of planes $\VV (\colorg{g_0})\subset \PP^3$ into the cone over the target hypersurface $\colorg{g_1},$ and the plane $\VV (\ell_1)$ into $\VV (u).$
    In the plane $\PP^2$, this allows us to obtain $\VV (\colorf{F}) \cap \VV (\colorg{g_1})$ from $\VV (\colorf{F}) \cap \VV (\ell_0)$.
    }
    \label{fig:u-illustration}
\end{figure}

\end{ex}

\subsection{Witness sets}\label{sec:witness-sets}
In numerical algebraic geometry, algebraic varieties are represented by witness sets. In this subsection we review witness sets and show how they relate to the start and endpoints of our homotopy \eqref{eq:Htwave-Pn}.

An equidimensional  
variety $X\subset \mathbb{P}^n$
is a finite union of irreducible varieties of the same dimension. It 
is represented by a \emph{witness set}
$w := (F,L,W)$, 
a triple consisting~of
\begin{itemize}
    \item polynomials $F$ defining $X$ such that it is a union of irreducible components of $\VV(F)$; 
    \item general 
    \footnote{We use ``general'' throughout this article to mean ``avoiding some proper Zariski closed set''. Here this exceptional set has a simple description: $\VV(L)$ needs to be outside the set of subspaces in the Grassmannian $\Gr(n-c,n)$ that do not intersect $X$ regularly. However, the randomized methods of homotopy continuation don't rely on knowing the exceptional set description: we use ``general'' without  aiming to provide such a description later on (e.g., in~\Cref{rem:g0-Pn}).} 
    linear polynomials $L$ defining a codimension $\dim(X)$ linear space, informally called a slice of $\mathbb{P}^n$;
    \item the set of points $W = X \cap \VV(L)$. 
\end{itemize}

Witness sets may be used to test if a point is in an irreducible component of a variety~\cite[Chapter 15.1]{SW05}, 
describe a wide class of varieties, including closures of images of a rational maps~\cite{MR4121336} or subvarieties of products of projective spaces~\cite{MR2733776,MR4166467} and Grassmannians~\cite{Sottile2020}.

\begin{ex}
Suppose $X\subset \mathbb{P}^n$ is a finite set of points. 
A witness set for $X$ has the form
    $w=(F,\, \emptyset, \, X)$ where each point of $X$ is an isolated point in $\VV(F)$. 
\end{ex}

When $X$ is a finite set of points (ie.~$\dim X =0$), computing the intersection of $X$ with a hypersurface is straightforward:  $X\cap V(h) = \{ x \in X : h(x)=0  \}$. 
For $X$ of arbitrary dimension, Algorithm~\ref{algo:intersection} shows how to obtain a witness set for the intersection of $X$ with a hypersurface from a witness set for $X.$
This can be done by reduction to the case of a curve and applying~\Cref{prop:endpoint}.
First, we interpret the homotopy $\vartilde H_t$ in the language of witness sets in Example~\ref{ex:witness-start}.

\begin{ex}\label{ex:witness-start}
Recall from equation \eqref{eq:Htwave-Pn} that $X$ is a curve, 
$g_0\in \CC[\xx]_d$ is general,
and the intersection $\vartilde X\cap V(g_0)$ is a  degree $\deg(X)\cdot d $ curve.
Let $w_0$ denote a witness set for $\vartilde X \cap V(g_0)$.
Then the witness points of $w_0$ are a set of start points for our homotopy \eqref{eq:Htwave-Pn}. 
Simarly, the isolated endpoints of the homotopy $\vartilde H_t$ are witness points for $\vartilde X \cap \VV (g_1),$ and projection from $u$ gives witness points for $X \cap \VV (g_1)$.

To compute $w_0$ and thereby the start points of $\vartilde H_t$, assume we are given a witness set $w=(F,\ell,W)$ for $X$.
For each point $\xx^*$ in $W$, the set 
\[
\{ [u:\xx^*] \in \PP^{n+1} : g_0(u,\xx^*)=0 \}
\]
has 
$\deg(g_0)$ points in $\vartilde X\cap \VV(g_0)$.
All together, by solving $\deg(X)$  univariate degree  $\deg(g_0)$
polynomials, we have 
\[
w_0 = (F\cup \{ g_0\}), \, \ell ,  \, \{ [u:\xx^*] \in \PP^{n+1} : g_0(u,\xx^*)=0,\, \xx^* \in W \}.
\]
We summarize this process in Algorithm~\ref{algo:start-points-for-Pn}.
\end{ex}

\begin{algorithm}[H]
\caption{$\uStartPoints(w,g_0)$}
    \label{algo:start-points-for-Pn}
    
\KwIn{
    A witness set $w=(F,\ell,W)$ for a curve $X\subset\PP^n$;\\
    a general homogeneous polynomial $g_0\in \CC[u,\xx]$ of degree $d$.
}
\KwOut{
    A witness set $w_0$ representing $\vartilde X\cap\VV(g_0) \subset\PP^{n+1}$. 
}
\smallskip
    Compute  $$S_0 \gets \{ [u:\xx]  \mid \xx\in W \text{ and } g_0(u,\xx)=0 \},$$ which requires solving a univariate polynomial equation for each $\xx \in W$.
    
    Let $
    w_0 \gets \big(F\cup\{ g_0 \},\, \{ \ell \} ,\, S_0 \big).
    $ \CommentSty{ // Here $F$ and $\ell$ (which do not depend on $u$) should be seen as elements of $\CC[u,\xx]$.}
\end{algorithm}


One important fact to recall about intersecting an irreducible projective variety $X$ with a hypersurface $\VV(h)$ is that it leads to one of two cases: 
either $X\cap \VV(h)=X$ or $X\cap \VV(h)$ is equidimensional with dimension $\dim(X)-1$. 
The former case only occurs when $X$ is contained in the hypersurface. In the latter case the dimension decreases by one. Moreover, the degree of the intersection is bounded above by $\deg(h)\cdot \deg(X)$.

More generally, if we drop the irreducibility hypothesis and only assume $X$ is equidimensional, then the intersection $X\cap \VV(h)$ consists of two equidimensional components: 
the union of all irreducible components of $X$  that are also contained $\VV(h)$, and 
$\overline{(X\setminus \VV(h))\cap \VV(h)}$.
The first equidimensional component has the same dimension as $X$ and the second  has dimension $\dim(X)-1$. 

As an immediate consequence of these facts, the results in the previous sections,  and these examples, we have Algorithm~\ref{algo:intersection}. 

\begin{algorithm}[H]
\DontPrintSemicolon 
\KwIn{
    A homogeneous polynomial $g_1\in \CC[\xx]$, and 
    a witness set $w=(F,L,W)$ for a pure $\dimX$-dimensional variety $X\subset\PP^n$. 
    \;
}
\KwOut{
    Witness sets for the equidimensional components of $X\cap\VV(g_1)$.\; 
}
\label{item:partition} Record the witness set
    \[w^{(\dimX)}\leftarrow (F, L, W \cap  V(g_1) )\] 
    for  the union of irreducible components of $X$ contained in $V(g_1)$.
    \;
\eIf{$r\geq 1$}{
    Let $\ell$ denote a linear polynomial in $L$.\;
    For the curve $Z:= \overline{(X \setminus \VV (g_1))}\cap V(L\setminus \{\ell \})$, form the witness set
        \[
        w_Z \leftarrow (F\cup L\setminus \{\ell\}, \,
        \{\ell\}, \,
         W \setminus  \VV(g_1)).
        \]
    \vspace{-12pt}   
    \label{item:update-to-curve}
    \;
Choose $g_0\in \CC[u,\xx]_{\deg g_1}$ 
    (e.g., as in \Cref{rem:g0-Pn}) 
    to form the homotopy
    \begin{equation}
    \label{eq:Htwave-Pn-Z}
    \vartilde H_t : (g_0,\ell)
    \leadsto
    (g_1,u)\quad \text{ on }\vartilde Z.
    \end{equation}
    \vspace{-12pt}   
    \label{item:make-homotopy}
   \;
Follow $|W \setminus  \VV(g_1)|\cdot\deg(g_1)$ homotopy paths starting (at $t=0$) from the witness set for $\vartilde Z\cap \VV(g_0)$ returned by $\uStartPoints(w_Z,g_0)$ to obtain endpoints $E = \vartilde Z \cap \VV(g_1,u)$ at $t=1$.
    \label{item:use-homotopy}
   \;
Record the witness set 
        \[
        w^{(\dimX-1)} \leftarrow (F\cup \{ g_1\}, \, L\setminus \{\ell\} ,\,
        \pi_\xx(E)),\]
        where $\pi_\xx$ is the projection that drops the $u$-coordinate, which represents  $\overline{ (X\setminus \VV(g_1) )} \cap \VV(g_1)$. 
        \label{item:update-to-intersection}
        \;
\Return{ $\{w^{(\dimX)},w^{(\dimX-1)}\}$}\;
    }{
      \Return{ $\{w^{(\dimX)}\}$}\;
    }

\caption{$\IntersectionWithHypersurface(w,g_1)$}
\label{algo:intersection}
\end{algorithm}

\begin{remark}\label{rem:g0-Pn} 
When implementing Algorithm~\ref{algo:intersection} and subsequent algorithms, the polynomial $g_0$ should be chosen in a sufficiently random fashion. At the same time it is desirable to have a low evaluation cost for both the values of $g_0$ and the roots of $g_0(u,\xx)=0$ when $\xx$ is fixed.

One good candidate is $g_0=\gamma(u^d-\ell_0^d)$ where $d=\deg(g_0)$, the linear form $\ell_0\in\CC[\xx]_1$ doesn't vanish on the start points, and $\gamma\in\CC$ is generic\footnote{The constant $\gamma$ needs to avoid a \emph{real} hypersurface (in fact, a union lines through the origin) in $\RR^2\cong\CC$ containing ``bad'' choices: a choice of $\gamma$ is ``bad'' if the homotopy paths of \eqref{eq:Htwave-Pn} cross.}.
This is akin to the so-called \emph{$\gamma$-trick}~\cite[pp.~94--95, 120--122]{SW05}. 
Often, for practical purposes, $\gamma$ is chosen randomly on the unit circle. In this case, only finitely many points on the circle are exceptional.
\end{remark}

One other important detail for a reader who intends to implement Algorithm~\ref{algo:intersection} is in the following remark. 

\begin{remark}\label{rem:eliminate-u}
A typical implementation of a homotopy tracking algorithm would introduce an affine chart on $\PP^{n+1}$ by imposing an additional affine linear equation in $u$ and $\xx$.
One can eliminate $u$ in two ways: (a) using the chart equation or (b) using the equation $(1-t)\ell + tu=0$ from homotopy \eqref{eq:Htwave-Pn}. This results in going back to the original ambient dimension. Note that solving (b) for $u$ is not possible when $t=0$. 
\end{remark}

\subsection{Equation-by-equation cascade} 

Algorithm~\ref{algo:cascade} uses the methods in Section~\ref{ss:homotopy} to compute the witness set(s) for the intersection of a variety with a hypersurface.

\begin{algorithm}[H]
\DontPrintSemicolon 
\KwIn{
    $F=(f_1,\dots,f_c),\ f_i \in \CC[\xx]$
}
\KwOut{
    A collection $C$ of witness sets describing equidimensional pieces of~$\VV(F)$.
}

Let $w = (\emptyset, \{\ell_1,\dots,\ell_n\}, \VV(\ell_1,\dots,\ell_n))$, a witness set of a point, describing~$\PP^n$.\;
Initialize $C \leftarrow \{w\}$.
\;

\For{$i$ from $1$ to $c$}{
$C' \leftarrow \emptyset$
\;

\For{each $w \in C$}{
$C' \leftarrow C' \cup \IntersectionWithHypersurface(w,f_i)$\;
}

$C \leftarrow \text{\EliminateRedundantComponents}(C')$, a routine that erases components contained in other components.  
}
\caption{Equation-by-equation cascade}
\label{algo:cascade}
\end{algorithm}

The correctness of the equation-by-equation cascade outlined Algorithm~\ref{algo:cascade} hinges mainly on Algorithm~\ref{algo:intersection}. 
However, an implementer of the cascade must pay significant attention to details with a view to practical efficiency. 
For instance, ``pruning'' of the current collection of witness sets done via $\EliminateRedundantComponents$ after each step could be replaced by a potentially more efficient {\em en route} bookkeeping procedure; see \cite{HSW:regenerative-cascade}.

\begin{remark}
Our description of homotopies is geometric: we consider homotopies on a variety $X$ (or the cone $\vartilde X$.) However, the algebraic representation of $X$ as a component of $\VV(F)$ may not be \emph{reduced}, i.e. $F$ may be singular at some points of $X$. 
A common way to address this issue in an implementation is \emph{deflation}, which is a technique of replacing $F$ with a system of polynomials (potentially in more variables) that are regular at a generic point: see \cite[\S 13.3.2]{SW05} and \cite{LVZ-deflation}.
		
\end{remark}

\section{\TeXheadline{$u$}-generation in products of projective spaces}
\label{sec:multiprojective}
In this section, we generalize the homotopy $\vartilde H_t$ to work in a product of projective spaces $\PP^{n_1}\times\cdots \times \PP^{n_k}$. 
To ease exposition, we restrict the discussion to the case where $k=2,$ a product of two projective spaces. All results extend to an arbitrary number of factors.
In~\Cref{subsec:MLE}, we study an example with $k=3$ factors in detail.

\subsection{Setting up the homotopy}
We define the \demph{double cone} of a biprojective variety $X \subset \PPtwo$ as 
\[
\vartilde X = \overline{\{ \left([u:\xx],\, [v:\yy] \right) \in \PP^{m+1} \times \PP^{n+1} \mid (\xx , \yy) \in X \}}.
\]

In the example below, we consider simple but illustrative case where $X$ is a single point.
This provides some visual intuition for the homotopy paths of $\tilde{H_t}$.

\begin{ex}[Double cone over a point]
For a point
$P=(\xx^\star,\yy^\star)\in \PPtwo$, the double cone $\vartilde P $ is the $2$-dimensional biprojective linear space spanned by the points 
\[
([1:\xx^\star],\, [1:\yy^\star]),\,
([0:\xx^\star],\, [1:\yy^\star]), \text{ and }
([1:\xx^\star],\, [0:\yy^\star]).
\]
Consider the set of $d$ points in the intersection $\vartilde P \cap \VV(\colorg{g},L_\epsilon)$, where the hypersurface $\VV (\colorg{g})$ is given by general $\colorg{g}\in \CC[\{u,\xx\},\{v,\yy\}]_{(d,e)}$ and the family of hyperplanes defined by $L_\epsilon:=\epsilon v-(1-\epsilon) \ell$ where $\ell\in \CC[\yy]_1$ is general. 
This set of points may be recovered in two easy steps:
\begin{itemize}
\item[1.] Solve $L_\epsilon (\yy^\star, v) = 0$ to obtain $\colorf{v^\star} = \epsilon \ell(\yy^\star)/(1-\epsilon)$.
\item[2.] Compute the $d$ roots $\colorf{u_1^\star}, \ldots , \colorf{u_d^\star}$ of the univariate polynomial
$\colorg{g}(u,\xx^\star, \colorf{v^\star},\yy^\star)$.
\end{itemize}

Projecting a general affine chart of $\vartilde P$ to the affine $uv$-plane $\CC^2_{uv}$ is a surjection. 
The intersection $\vartilde P\cap \VV(g,L_\epsilon)$ can be visualized as the intersection of a line $\VV(L_\epsilon(v,\yy^\star))$ with the plane curve defined by  $\colorg{g}(u,\xx^\star,v,\yy^\star) =0.$ 
Figure~\ref{fig:uv-intersection} illustrates a case where $(d,e)=(2,1).$ 
In the limit as $\epsilon\to0$, $\colorf{v^\star}$ tends toward infinity, while the roots $\colorf{u_1^\star}, \ldots , \colorf{u_d^\star}$ tend towards the $d$ vertical asymptotes of the curve given by $\colorg{g}$. 
If we write $\colorg{g}(u,\xx^\star,v,\yy^\star) = f(u) \, v^d + \text{additional terms},$ these asymptotes cross the $u$-axis at the points where $u$ is a root of $f.$
\begin{figure}[htb!]
    \centering
\begin{tabular}{ccc}
\includegraphics[width=12em]{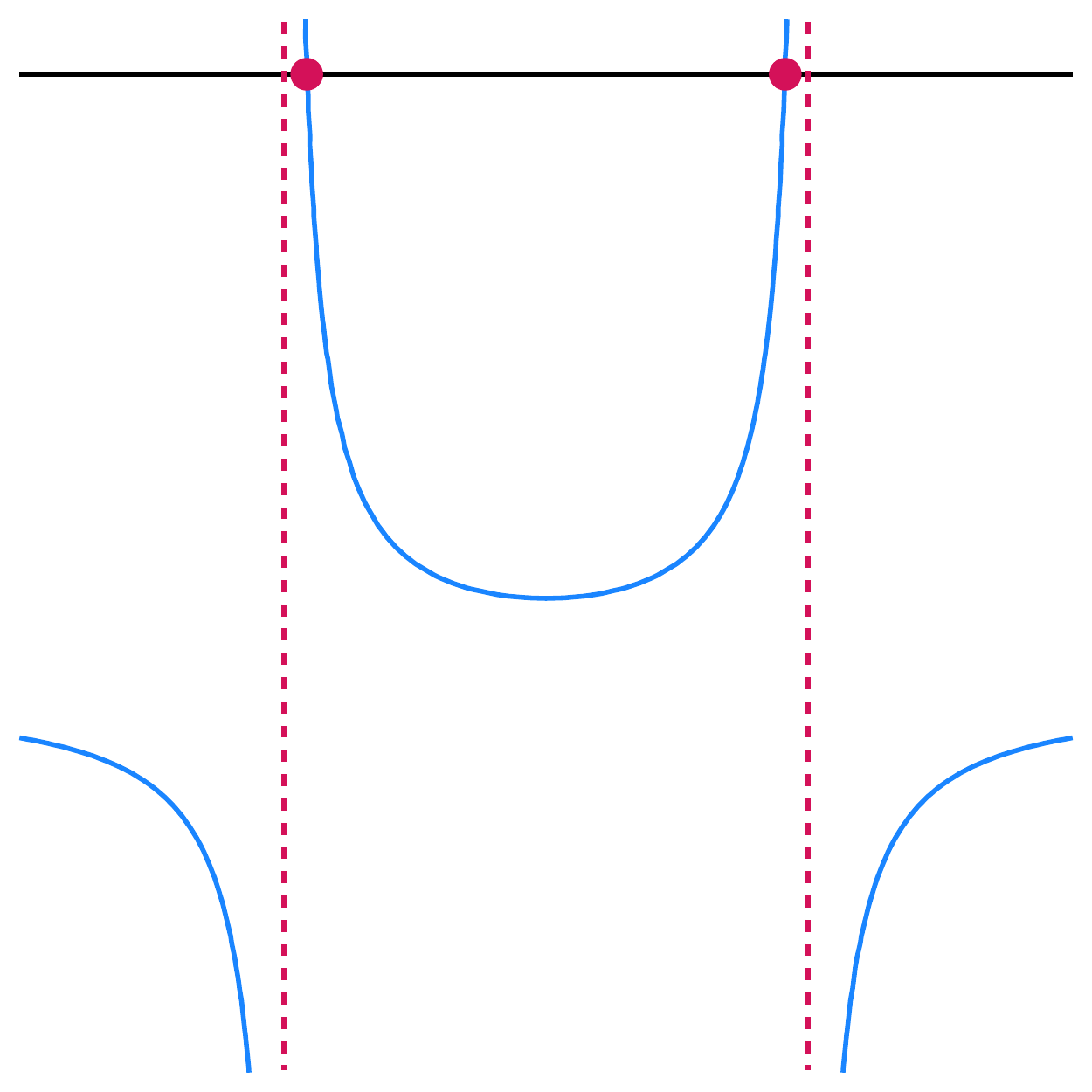} & &
\includegraphics[width=12em]{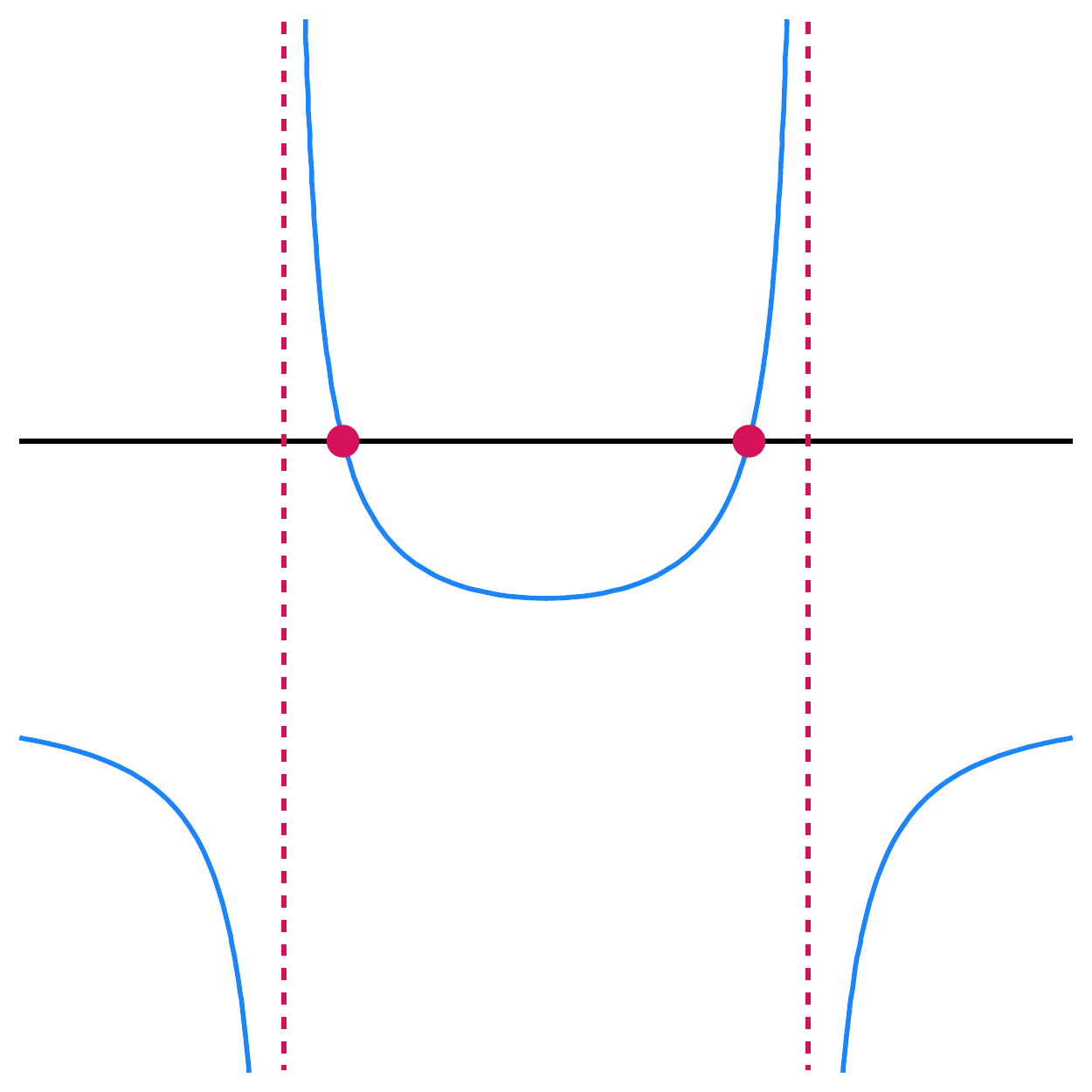}\\
$\epsilon \approx 0$ & & $\epsilon \gg 0$
\end{tabular}
    \caption{
    Interesecting the double cone over a point $\tilde P$ with $\VV(\colorg{g},L_\epsilon)$, producing points $(\colorf{u_1^\star}, \colorf{v^\star}), (\colorf{u_2^\star}, \colorf{v^\star})$ in the affine plane plane $\CC_{uv}^2.$
    }
    \label{fig:uv-intersection}
\end{figure}
\end{ex}


Now we consider the double cone over a curve $X\subset\PPtwo$ and state how they share some of the same degree information. 
Following the conventions in~\cite{MR4166467}, 
the bidegree of a curve $X$ in $\PPtwo$
with coordinate ring $\CC[\xx,\yy]$ 
is  
\[
( \deg_{\xx}(X),\deg_\yy(X) ):= ( \deg(X\cap \VV(\ell_{\xx})), 
    \deg(X\cap \VV(\ell_{\yy}) )
    ) 
\]
where $\ell_\xx\in \CC[\xx]_1$, $\ell_\yy\in \CC[\yy]_1$ are general. 
Let $\ell_{u,\xx}\in \CC[u,\xx]_1$ and $\ell_{v,\yy}\in \CC[v,\yy]_1$
be general.  
Then the bidegrees of the curves $X\subset \PPtwo$ and  $\vartilde X \cap \VV(\ell_{u,\xx},\ell_{v,\yy})\subset \PP^{m+1}\times \PP^{n+1}$ coincide. 

\begin{remark} A polynomial $h\in \CC[\xx,\yy]_{(d,e)}$ is defined to have bidegree $(d,e)$. 
When $(m,n)=(1,1)$, the polynomial $h$ defines a curve $\VV(h)\subset\PP^1\times\PP^1$.
The bidegree of $\VV(h)$ and bidegree of the polynomial $h$ are related by a transposition:
$(\deg_{\xx}(\VV(h)), \deg_{\yy}(\VV(h)) ) =(e,d).$
\end{remark}

The following proposition is a needed analogue to \Cref{prop:endpoint}.
Analogously to the homotopy in~\Cref{eq:Htwave-Pn}, for given $g_1 \in \CC[\xx , \yy]_{d,e}$ we define a homotopy
\begin{equation}\label{eq:two-hom-homotopy}
\vartilde H_t:=(g_0, \ell_\xx, \ell_\yy )
\leadsto
(g_1, u, v) \quad \text{ on }\vartilde X
\end{equation}
where $g_0 \in \CC[\{u,\xx\}, \{v,\yy\}]_{d,e}$.

\begin{proposition}\label{prop:two-factor-homotopy}
For generic $\ell_\xx \in \CC[\xx]_1$, $\ell_\yy \in \CC[\yy]_1$, $g_0 \in \CC[\{u,\xx\} , \{v,\yy\}]_{d,e},$ the cardinality of the set of points on $\vartilde X$ satisfying $\vartilde H_t$  equals, for all $t\in (0,1),$  
\[
d\cdot \deg_\xx(X) +e\cdot \deg_\yy(X).
\]
The endpoints of $\vartilde H_t$ lie in the set
\[
\left\{ ([0:\xx], [0:\yy]) \in \PP^{m+1} \times \PP^{n+1} \mid (\xx , \yy ) \in X \cap \VV (g_1).
\right\}
\]
In case this set is finite, every point is reached.
\end{proposition}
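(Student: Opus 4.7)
The plan is to follow the blueprint of the proof of \Cref{prop:endpoint}, replacing the classical Bezout count with its multiprojective analogue and otherwise proceeding in parallel. There are three claims to establish: (i) the generic cardinality on each $\vartilde H_t$ for $t \in (0,1)$; (ii) the characterization of endpoints via $u = v = 0$; and (iii) reachability of each endpoint in the finite case.

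For (i) I would introduce the exceptional set
\[
\Sigma = \{(g, \ell', \ell'') \in \CC[\{u,\xx\},\{v,\yy\}]_{d,e} \times \CC[\xx]_1 \times \CC[\yy]_1 \mid |\vartilde X \cap \VV(g, \ell', \ell'')| \neq d\deg_\xx X + e\deg_\yy X\}
\]
in the parameter space, which is Zariski-closed by upper semicontinuity of intersection cardinality. To see that $\Sigma$ is a \emph{proper} subvariety, I would invoke the preceding remark: for generic $\ell_\xx, \ell_\yy$, the curve $\vartilde X \cap \VV(\ell_\xx, \ell_\yy)$ has bidegree $(\deg_\xx X, \deg_\yy X)$, and its intersection with a generic $(d,e)$-hypersurface $\VV(g_0)$ yields $d \deg_\xx X + e \deg_\yy X$ points by the multiprojective Bezout formula, exhibiting a single point of the parameter space outside $\Sigma$. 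The argument of \cite[Lemma 7.12]{SW05} then forces the real segment $(1-t)(g_0, \ell_\xx, \ell_\yy) + t(g_1, u, v)$ to miss $\Sigma$ for every $t \in (0,1)$.

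For (ii) direct inspection of $\vartilde H_1 = (g_1, u, v)$ shows that every endpoint satisfies $u = v = 0$ and hence lies in $\{([0:\xx],[0:\yy]) : (\xx,\yy) \in X \cap \VV(g_1)\}$. For (iii), in the finite case, I would replicate the branched-covering argument from the proof of \Cref{prop:endpoint} applied to the incidence variety
\[
\Gamma = \{((g, \ell', \ell''), (p_1,p_2)) : (p_1,p_2) \in \vartilde X,\ g(p_1,p_2) = \ell'(p_1) = \ell''(p_2) = 0\}
\]
equipped with the projection $\pi$ to the parameter space: isolate a target endpoint $p \in \pi^{-1}(g_1,u,v)$ in a Euclidean neighborhood $V$ on which $\pi$ restricts to a biholomorphism onto an open set $U$, observe that the interpolation segment meets $U$ at times arbitrarily close to $t=1$, lift these points into $V$, and pass to the limit to obtain $p$ as the endpoint of a homotopy path.

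The main obstacle I anticipate is delivering a clean bidegree count on the double cone. Unlike the single projective setting, $\vartilde X$ has nontrivial boundary in $\{u=0\}\cup\{v=0\}$, so one must confirm that generic $(g_0, \ell_\xx, \ell_\yy)$ does not absorb intersection mass on this boundary; alternatively, one can perform the count on the dense open stratum where $u,v \neq 0$ (identified with $X \times \CC^* \times \CC^*$) and then verify that no solutions of $\vartilde H_t$ for $t \in (0,1)$ escape to the boundary.
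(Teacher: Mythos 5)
Your parts (ii) and (iii) are fine and essentially coincide with the paper, which indeed disposes of the endpoint description and reachability by repeating the branched-covering argument of \Cref{prop:endpoint}. The genuine gap is in part (i), at the step where you exhibit a point of the parameter space outside $\Sigma$. The point you offer is $(g_0,\ell_\xx,\ell_\yy)$, justified by the bidegree remark; but that remark concerns slices by \emph{general} linear forms $\ell_{u,\xx}\in\CC[u,\xx]_1$, $\ell_{v,\yy}\in\CC[v,\yy]_1$, whereas $\ell_\xx,\ell_\yy$ do not involve the cone variables and are highly special slices of $\PP^{m+1}\times\PP^{n+1}$. In fact $\vartilde X\cap\VV(\ell_\xx,\ell_\yy)$ is exactly the degenerate set $Z_0$ of \Cref{rem:multiprojective-points-at-t0}: a union of lines through the cone vertices, entirely contained in the boundary strata $\{\yy=\bfzero\}\cup\{\xx=\bfzero\}$, and its intersection with $\VV(g_0)$ has $d\cdot|\pi_\xx(X\cap\VV(\ell_\xx))|+e\cdot|\pi_\yy(X\cap\VV(\ell_\yy))|$ points, which is strictly smaller than $d\deg_\xx(X)+e\deg_\yy(X)$ whenever the coordinate projection identifies witness points (e.g.\ when $\pi_\xx|_X$ is not birational onto its image). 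So the exhibited point may itself lie in $\Sigma$; this degeneration at $t=0$ is precisely why the proposition claims the count only for $t\in(0,1)$, why multiple paths can collide as $t\to 0$, and why Algorithm~\ref{algo:bi-hom-start-points} must start at $t=\varepsilon>0$. The ``obstacle'' you flag in your last paragraph is therefore not a technicality to be checked but the exact place where the proposed argument breaks: at the start system \emph{all} of the intersection mass sits on the boundary. (A smaller issue: your $\Sigma$ and $\Gamma$ take $\ell'\in\CC[\xx]_1$, $\ell''\in\CC[\yy]_1$, a space that does not contain the target $(g_1,u,v)$; you need $\CC[u,\xx]_1\times\CC[v,\yy]_1$ for the segment and the incidence variety to make sense.)

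Even if the $t=0$ cardinality happened to be correct, a Lemma~7.12--style segment argument requires the start parameter to be generic in an ambient space where $\Sigma$ is proper, and $(g_0,\ell_\xx,\ell_\yy)$ is confined to the special locus of systems with $u$- and $v$-free slices, so ``generic start point'' reasoning does not directly show the open segment misses $\Sigma$. The paper takes a different route that sidesteps $t=0$ entirely: for each fixed $t\in(0,1)$ the equations $(1-t)\ell_\xx+tu=0$ and $(1-t)\ell_\yy+tv=0$ solve for $u$ and $v$ as linear functions of $\xx$ and $\yy$, so the slice $Z_t$ of \eqref{eq:zt} is a graph over $X$ and hence a curve of the same bidegree as $X$; then the multihomogeneous B\'ezout count for $Z_t\cap\VV(g_t)$, with $g_t$ generic because $g_0$ is, gives $d\deg_\xx(X)+e\deg_\yy(X)$ on all of $(0,1)$. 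To repair your proof, either adopt this per-$t$ analysis or anchor your segment argument at an interior value of $t$ (after first establishing the count there), rather than at the degenerate start system.
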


\begin{proof}
The homotopy $\vartilde H_t$ 
is given by 
\begin{equation}\label{eq:bi-homotopy-equations}
g_t:=(1-t)g_0+tg_1 = 0, \quad
(1-t)\ell_\xx+tu = 0,\quad
(1-t)\ell_\yy+tv = 0
\end{equation}

For $t\in (0,1)$, the intersection
\begin{equation}\label{eq:zt}
Z_t:=\vartilde X\cap 
\VV ( 
(1-t)\ell_\xx+tu,
(1-t)\ell_\yy+tv ).
\end{equation}
is a curve in $\PP^{m+1}\times \PP^{n+1}$ with the same bidegree as $X \subset \PP^{m}\times \PP^{n}$, i.e.,  
\[
\deg_\xx(X)=\deg_{u,\xx}(Z_t)
\text{ and }
\deg_\yy(X)=\deg_{v,\yy}(Z_t).
\]
since the last two equations give $u$ and $v$ as a linear functions of $\xx$ and $\yy$, respectively, when $t\in (0,1)$.

Since $g_0$ is general, so is $g_t$. Hence,
for $t\in (0,1)$ we get that
$Z_t\cap V(g_t)$ has 
$d\cdot \deg_{u,\xx}(Z_t)+e\cdot \deg_{v,\yy}(Z_t)$
points of intersection. 
This proves the first part of the proposition. 

The proof of the second part is similar to the proof of the second part of~\Cref{prop:endpoint} concluding that
a point $[\xx_1:\xx_2]\in X\cap V(g_1)$
corresponds to an endpoint $([0:\xx_1],[0:\xx_2])$ of the homotopy.
\end{proof}

\begin{remark}\label{rem:multiprojective-points-at-t0}
If $X$ is a general bidegree $(d_\xx,d_\yy)$ curve in $\PPtwo$, 
then 
$Z_0$, as defined in \eqref{eq:zt}, 
is a union of two linear spaces with multiplicity $d$ and $e$ respectively. Specifically, 
\begin{eqnarray*}
Z_0&=&\{
([u:\xx], [1:\mathbf{0}]) \in \vartilde X
: \ell_\xx(\xx)=0
\} \ \cup\\
&& 
\{
([1:\mathbf{0}]  , [v:\yy] ) \in \vartilde X
: \ell_\yy(\yy)=0
\}.
\end{eqnarray*}
On the other hand, the homotopy $\vartilde H_t$ of~\eqref{eq:two-hom-homotopy} converges to 
\begin{equation}\label{eq:bi-start-points}
\begin{array}{rcl}
\VV(H_0) \cap \vartilde X &=& \{ ([u:\xx],[1:\bfzero]) \mid \\
&& \xx \in \pi_\xx(X\cap\VV(\ell_\xx)), \text{ and } u \text{ satisfies } g_0(u,\xx,1,\bfzero) = 0 \}\\
&\cup& \{ ([1:\bfzero],[v:\yy]) \mid \\
&& \yy \in \pi_\yy(X\cap\VV(\ell_\yy)), \text{ and } v \text{ satisfies } g_0(1,\bfzero,v,\yy) = 0 \}
\end{array}
\end{equation}
when $t\to 0$.
Thus multiple paths may tend to the same point as $t\to 0$. 

\end{remark}

\subsection{Points for the homotopy at \TeXheadline{$t=\varepsilon$}}
In view of \Cref{rem:multiprojective-points-at-t0} (see also \Cref{rem:double-cone-artifacts}), we need to step away from $t=0$ and find a way to produce a set of start points for the homotopy for $t=\varepsilon>0$ for a small $\varepsilon$.



 Fix an affine chart on  $\PP^{m+1}\times\PP^{n+1}$; e.g., take $x_0=y_0=1$. Without loss of generality we may assume $x_0$ and $y_0$ don't vanish on any of the witness points.  
 Consider a continuation path that tends to the point of the form $([u^*:\xx^*] , [1:\mathbf{0}])$ as $t\to 0$ (a point that is not in the fixed chart) in a small punctured neighborhood of $t=0$. 

 View the coordinates  $(u(t),\xx(t),v(t),\yy(t))$ of this path as (convergent in the punctured neighborhood) Puiseux series where $u(t),\xx(t),\yy(t)$ have a (nonzero) constant leading term while $v(t)$ diverges as $t\to 0$. Note that the constant terms of $\xx(t),\yy(t)$ are given by a witness point $[\xx^*:\yy^*] \in X\cap \VV(\ell_\xx)$ (with $x_0=y_0=1$).
 
 First, we note that since the path converges in $\PP^{m+1}\times\PP^{n+1}$  
 to a point $([u^*:\xx^*] : [1:\mathbf{0}])$ the leading term of $u(t)$ is a constant term. Plug $u(t)$ in the second equation in \eqref{eq:bi-homotopy-equations}, 
 \[
 (1-t)\ell_\xx+tu = 0
 \]
 and consider it on $X$ (not $\vartilde X$).
 It defines a continuous family of (transverse) $\xx$-slices of $X$ which limit at $\VV(\ell_\xx)$ showing that $\xx(t)$ and $\yy(t)$ tend to $\xx^*\neq 0$ and $\yy^*\neq 0$ as $t\to 0$.  

 Recall that the end of the path has $[1:\mathbf{0}]$ for $[v(t):\yy(t)]$, therefore, $v(t)$ has to diverge as $t\to 0$. 
Since $(1-t)\ell_\yy+tv = 0,$ we have 
$$
v(t) = (1-t^{-1})\ell_\yy(\yy(t)).
$$
Thus the leading term of $v(t)$ is $-\ell_\yy(\yy^*)t^{-1}$.

In summary, for a small $\varepsilon$, one may take  $(u(\varepsilon),\xx(\varepsilon),v(\varepsilon),\yy(\varepsilon)) \approx (u^*,x^*,v^*,y^*)$ where $v^* = -\varepsilon^{-1}\ell_\yy(y^*)$.
Another asymptotically equivalent choice, which would satisfy the last equation, is $v^* = (1-\varepsilon^{-1})\ell_\yy(y^*)$.

An analogous argument holds if one reverses the roles of $(u,\xx)$ and $(v,\yy)$.
Knowing the asymptotics of continuation paths as $t\to 0$ results in  Algorithm~\ref{algo:bi-hom-start-points} written in a chart-free form. 

\begin{algorithm}[h!]
\DontPrintSemicolon 
\caption{$\uMultiprojectiveStartPoints(w_\xx,w_\yy,g_0,\varepsilon)$}
\label{algo:bi-hom-start-points}
\KwIn{
    A biprojective curve $X\subset\PPtwo$
    represented by two witness sets:\\
    $w_\xx=(F,\ell_\xx,W_\xx)$ where $\ell_\xx\in \CC[\xx]_1$ and
    $w_\yy=(F,\ell_\yy,W_\yy)$ where $\ell_\yy\in \CC[\yy]_1$;\\
    a general bihomogeneous polynomial $g_0\in \CC[\{ u,\xx\},\{v,\yy \}]_{(d,e)}$;\\ 
    a number $\varepsilon>0$.
    \;
}
\KwOut{
    A set of points that approximates $\VV(\vartilde H_\varepsilon)$ for the homotopy \eqref{eq:two-hom-homotopy}. \;
}
Initialize $S\leftarrow \emptyset$.
    \;
Let $g_\xx\in \CC[u,\xx]$ be defined as $g_\xx(u,\xx)=g_0(u,\xx,1,\mathbf{0})$.\; 
\For{ $P\in W_\xx$}{
    Update $S\leftarrow S \, \cup  \,\vartilde P \cap \VV( g_\xx, 
    (1-\varepsilon)\ell_\yy+\varepsilon v)$
    }
Let $g_\yy\in \CC[v,\yy]$ be defined as $g_\yy(v,\yy)=g_0(1,\mathbf{0},u,\xx)$.\; 
\For{ $Q \in W_\yy$}{
    Update $S \leftarrow S \, \cup \, \vartilde Q \cap \VV( g_\yy, 
    (1-\varepsilon)\ell_\xx+\varepsilon u)$
    }
\Return{ S
    }
    \;
\end{algorithm}

\begin{remark}\label{rem:double-cone-artifacts}
Note that, in Algorithm~\ref{algo:bi-hom-start-points}, starting at $t=\varepsilon>0$ may be necessary not only due to several paths converging to the same point at $t=0$
as pointed out in \Cref{rem:multiprojective-points-at-t0},
but also for the following reason that is bound to play a role in a practical implementation. 

Let $F\subset\CC[\xx,\yy]$ be polynomials that cut out the curve $X\subset\PP^m\times\PP^n$, i.e., $X$ is a component of $\VV(F)$. The double cone $\vartilde X$ is a component of $\VV(\vartilde F)$ where $\vartilde F\subset\CC[u,\xx,v,\yy]$ are polynomials $F$ recast in a new ring. While this description of $\vartilde X$ is practical and retains properties essential to our approach (as explained in \Cref{prop:two-factor-homotopy}), the variety $\VV(\vartilde F)$ may possess extraneous components that contain points of \eqref{eq:bi-start-points} (even if they are regular points for $\vartilde H_t$). 

\end{remark}

\begin{remark}
When $g_0=g_1$, the homotopy \eqref{eq:two-hom-homotopy} used in Algorithm~\ref{algo:bi-hom-start-points} can be visualized in terms of intersecting double cones over a point with a hyperplane and hypersurface as in Figure~\ref{fig:uv-intersection}.
Namely, we take the double cone over each point $P$ in $W_\xx$ (analogously for $Q$ in $W_\yy$) and intersect it with $\VV\left(g_0, (1-\epsilon )\ell_\yy +\epsilon v\right)$ 
as $\epsilon$ varies between $0$ and $1$.
\end{remark}

\begin{remark} 
\label{rem:g0-multi}
The homotopy $\vartilde H_t$ depends on the choice of sufficiently generic $g_0.$
For implementation purposes one may pick (use $\gamma$-type tricks if necessary)
\begin{enumerate}
    \item $g_0 = (u^d-\ell_\xx ^d) (v^e-\ell_\yy^e)$, 
    where $\ell_\xx \in \CC[\xx]_1$,  $\ell_\yy\in \CC[\yy]_1$ are general linear polynomials (similar to \Cref{rem:g0-Pn}), or
    
    \item $g_0 = \prod_{i=1}^d (u-\ell_{\xx,i})\prod_{j=1}^e (v-\ell_{\yy,j})$
     where $\ell_{\xx,i} \in \CC[\xx]_1$,  $\ell_{\yy,j}\in \CC[\yy]_1$ are general linear polynomials.  
\end{enumerate}

\end{remark}

As mentioned in the beginning of this section, the results generalize straightforwardly to a curve $X$ in a product of $k$ projective spaces in any dimensions. 

\begin{remark}
In $u$-generation for multihomogeneous systems one may eliminate the additional variables ($u$ and $v$ in the case of two projective factors), as in   
\Cref{rem:eliminate-u} when using generic affine charts in the projective factors. 

However, there is a caveat: the conditioning of the resulting homotopy paths becomes much worse at $t=\varepsilon$, for small $\varepsilon$, in both cases (a) and (b) of \Cref{rem:eliminate-u}. In a practical implementation, one should eliminate additional variables only when the path tracking routine moves sufficiently far from $t=0$.  
\end{remark}

\subsection{Beyond curves}
In the projective case we show in Algorithm~\ref{algo:intersection} how to apply $u$-generation for intersecting a $d$-dimensional projective variety with hypersurface.
Some additional work is required to extend these observations to the multiprojective setting. 
The variety $X$ must be now be represented by a \emph{multiprojective witness collection}~\cite[Definition 1.2]{MR4121336}.
For example, the witness collection of an irreducible $3$-fold $X=\VV(F)$ in $\mathbb{P}^3\times\mathbb{P}^1$ has a collection of witness point sets:
\begin{align*}
w_{3,0}:=\VV(F)\cap \VV(\ell_{\xx,1},\ell_{\xx,2},\ell_{\xx,3}),\quad 
w_{2,1}:=\VV(F)\cap \VV(\ell_{\xx,1}, \ell_{\xx,2},\ell_{\yy , 1}),\quad \\
w_{1,2}:=\VV(F)\cap \VV(\ell_{\xx,1},\ell_{\yy,1},\ell_{\yy , 2}),\quad
w_{0,3}:=\VV(F)\cap \VV(\ell_{\yy,1},\ell_{\yy, 2},\ell_{\yy, 3}).    
\end{align*}
Note that $w_{a,b}$ with $a>3$ or $b>1$ is necessarily empty.

To describe the two dimensional variety $\VV(F)\cap \VV(g)$ we need to obtain the collection of witness points for $Y:=\VV(F)\cap \VV(g)$:
\begin{align*}
w_{2,0}(Y) :=
Y \cap \VV(\ell_{\xx,1},\ell_{\xx,2})
    ,\\
w_{1,1}(Y) := 
Y \cap \VV(\ell_{\xx,1},\ell_{\yy,1})
    ,\\
w_{0,2}(Y) :=
Y\cap  \VV(\ell_{\yy,1},\ell_{\yy,2}).
\end{align*}

To obtain $w_{a,b}(Y)$, 
for $a+b=2$, we run $u$-generation for the curve 
$\VV(F)\cap \VV(L)$ where $L=\{\ell_{\xx,1},\dots, \ell_{\xx,a},\, \ell_{\yy,1},\dots, \ell_{\yy,b}\}$ 
using start points obtained by  Algorithm~\ref{algo:bi-hom-start-points} with 
$(F\cup L, \ell_{a+1}, w_{a+1,b})$ and $(F\cup L,\ell_{b+1},w_{a,b+1})$ playing the role of $w_\xx$ and $w_\yy$.

In the previous section, $u$-generation and regeneration can both be used inside of the equation-by-equation cascade of
Algorithm~\ref{algo:cascade} based on intersection with a hypersurface in projective space.
The analogue in a product of projective spaces is known as \emph{multiregeneration}~\cite[Section 4]{MR4121336}.
A variant based on $u$-generation may be developed using the ideas outlined above.

\section{Computational experiments}\label{sec:experiments}

In this section, we report the computational results using our initial implementation of $u$-generation. 
For a fair comparison, we also implement regeneration in a similar fashion, using the same homotopy tracking toolkit provided by the package {\tt NumericalAlgebraicGeometry} \cite{leykin2011numerical} in \texttt{Macaulay2} \cite{M2www}. 
Default path-tracker settings for the method \texttt{trackHomotopy} are used, except where noted.
All computations were performed using a 2012 iMac with \SI{16}{GB}, working at \SI{1.6}{\GHz}.
Our modest aim is to convince the practitioner that 
$u$-generation is competitive as an equation-by-equation method, and that some initial successes of our implementation motivate further investigation and
more refined~heuristics.

\subsection{Intersecting with a hypersurface in projective space}\label{subsec:u-Pn}

The $u$-generation-based Algorithm~\ref{algo:intersection} for computing $X \cap \VV(g_1)$ requires tracking $\deg (g_1) \deg (X)$-many paths.
Regeneration also requires tracking this many paths, plus an additional $(\deg (g_1)-1) \deg (X)$ paths during the preparation stage.
In a rough analysis where we assume all paths have the same cost, we would expect that
\begin{equation}\label{eq:fraction-timing}
\displaystyle\frac{t_{u\text{-gen}}}{t_{\text{prep}}+t_{\text{regen}}} 
\approx 
\displaystyle\frac{\deg (g_1) \deg (X)}{2 \deg (g_1) \deg (X) - \deg (X)} 
=
\displaystyle\frac{1}{2-1/\deg(g_1)}
,
\end{equation}
where $t_{\text{prep}}, t_{\text{regen}},$ and $t_{\text{ugen}}$ denote the total time spent tracking paths during preparation, regeneration, or $u$-generation, respectively.
This analysis would then suggest that $u$-generation has
$\approx 33.3 \%$  savings over regeneration when $\deg (g_1)=2,$ and an asymptotic $50 \%$ savings when $\deg (g_1)$ is large.

The assumption that paths tracked during preparation cost the same as other paths does not hold in practice.
This is partly because homotopy functions involved in the preparation stage are simpler, and also due to the fact we do not expect any paths to diverge during this stage.
It is therefore worth investigating to what extent, if any, our proposed method may actually deliver any savings.
To that end, we considered two well-studied families of benchmark polynomial systems, and performed the following experiment for each:
\begin{itemize}
\item[1.] Drop an equation $g_1$ from the homogenized system, and compute a projective witness set for the projective curve $X$ defined by the remaining equations.
\item[2.] Run Algorithm~\ref{algo:intersection} to solve the original system.
\item[3.] Similarly to the previous step, use regeneration to solve the original system.
\end{itemize}
For all systems considered in this subsection, the witness set for $X$ is computed using a total-degree homotopy.
The timings we report for Steps 2 and 3 above do not reflect the full cost of solving these systems from scratch with equation-by-equation methods.
Nevertheless, this experiment is sufficient to make a meaningful comparison between regeneration and $u$-generation.

The first of the benchmark systems studied is the Katsura-$n$ family, which arose originally in the study of magnetism~\cite{katsura1990}.
For a given $n,$ this is a system of inhomogeneous equations in $n+1$ unknowns $x_0,\ldots , x_n$: writing $x_{-i} = x_i,$
\begin{align*}
\sum_{i=-n}^n x_i &= 1,\\
\sum_{i=-n}^n x_l x_{m-l} &= x_{m},\quad m\in \{-n+1,-n,..,n-1\} .
\end{align*}
For this family, the B\'{e}zout bound on the number of roots $2^{n-1}$ is tight.
In our experiments, the dropped equation $g_1$ is one of the $n$ quadratic equations.
We also considered the classic cyclic $n$-roots problem~\cite{DBLP:conf/issac/BackelinF91}:
\begin{align*}
\displaystyle\sum_{i=1}^n x_i \cdot x_{i+1} \cdots x_{(i+m) \% n} & =0, \quad m\in \{ 1, \ldots , n-1\},\\
x_0 \cdot x_1 \cdots x_{n-1}-1 &=0.
\end{align*}
This system has infinitely many solutions when $n$ is not squarefree.
For small, squarefree values of $n,$ this system has been observed to have all isolated solutions, their number matching the polyhedral bound of Bernstein's theorem~\cite{Bernstein}.
As a result, the polyhedral homotopy~\cite{HuStu95} and its various implementations~\cite{verschelde1999algorithm,HomotopyContinuationjulia,chen2014hom4ps} , is better-suited for this problem than equation-by-equation methods.
We include it in our experiments as a further point of comparison between regeneration and $u$-generation.
Based on the naive analysis, we might expect substantial savings when dropping the equation with $\deg (g_1) = n.$

\begin{figure}
\hspace{7.5em}
$\overbrace{\hspace{10.85em}}^{\text{regeneration}}$ $\overbrace{\hspace{10.25em}}^{\text{$u$-generation}}$\\
\resizebox{0.8\textwidth}{!}{\begin{tabular}{c|c||c|c|c||c|c|c}
system & $\#$ solutions & $\#$ paths & $t_{\text{prep}}$ & $t_{\text{regen}}$ & $\#$ paths & $t_{u\text{-gen}}$ & $\frac{t_{u\text{-gen}}}{t_{\text{prep}}+t_{\text{regen}}}$ \\[0.4em]
\hline 
katsura-8 &  \num{256} & \num{384} & \SI{1.1e-1}{\second} & \SI{2.5e-1}{\second}  & \num{256} & \SI{3.2e-1}{\second} & \textbf{.87}\\
katsura-9 &  \num{512} & \num{786} & \SI{2.7e-1}{\second} &\SI{6.0e-1}{\second}  & \num{512} & \SI{7.3e-1}{\second}  & \textbf{.84}\\
katsura-10 &  \num{1024} & \num{1536} & \SI{6.2e-1}{\second} & \SI{1.5}{\second} & \num{1024} & \SI{1.8}{\second} & \textbf{.86}\\
katsura-11 &  \num{2048} & \num{3072} & \SI{1.5}{\second} & \SI{3.2}{\second}  & \num{2048} & \SI{3.9 }{\second} & \textbf{.83}\\
katsura-12 &  \num{4096} & \num{6144} & \SI{3.6}{\second} & \SI{8.5}{\second} & \num{4096} & \SI{10.4}{\second} & \textbf{.85}\\
katsura-13 &  \num{8192} & \num{12288} & \SI{9.3}{\second}  & \SI{22.7}{\second}  & \num{8192}  &  \SI{26.9} & \textbf{.84} \\ 
cyclic-5 & \num{70} &  \num{126} & \SI{2.1e-2}{\second} & \SI{4.4e-2}{\second} & \num{70} & \SI{5.6e-2}{\second} & \textbf{.87}\\
cyclic-6 & \num{156} & \num{286} & \SI{5.7e-2}{\second} & \SI{1.4e-1}{\second}  & \num{156} & \SI{1.8e-1}{\second} &  \textbf{.90}\\
cyclic-7 & \num{924} & \num{1716} & \SI{4.5e-1}{\second} & \SI{1.4}{\second} & \num{924} & \SI{1.7}{\second} & \textbf{.89} 
\end{tabular}}
\caption{Timing Algorithm~\ref{algo:intersection} for benchmark polynomial systems.}\label{fig:benchmark-timings}
\end{figure}

In~\Cref{fig:benchmark-timings}, we display the results of our experiment on these benchmark problems.
Timings for each of the methods were averaged over $100$ iterations.
For cyclic-$7$, we observed multiple path failures for both methods with the default tracker settings, so we used a more permissive minimum stepsize of $10^{-8}$ for this case.
We observe in all cases that $u$-generation outperforms regeneration in terms of runtime.
Contrary to what the naive analysis would predict, typical savings are in the range of 10--15\% for both the Katsura and cyclic benchmarks.

To further investigate the nature of potential savings of $u$-generation over regeneration, we considered the following family of \demph{banded quadrics}.
Fixing integers $2 \le k \le n$, this is a homogeneous system $f_1,\ldots , f_n \in \CC [\xx ]$ where $f_1$ is linear and $f_2,\dots,f_n$ have the form 
\[
f_i (\xx) = \begin{bmatrix} x_i & \cdots & x_{(i +k\mod n)} \end{bmatrix}
\begin{bmatrix}
c_{i,1,1} & \cdots & c_{i,1,k}\\
\vdots & \ddots & \vdots \\
c_{i,k,1} & \cdots & c_{i,k,k} 
\end{bmatrix}
\begin{bmatrix} x_i \\ \vdots \\ x_{(i +k \mod n)} \end{bmatrix}.
\]
Here, the real and imaginary parts of the parameters $c_{i,j,k}$ are drawn randomly from the interval $[0,1]$.
We apply the same experiment from before to the homogeneous system, using a projective witness set for the projective curve $\VV(f_1,\dots,f_{n-1})$ 
to compute the intersection 
$\VV(f_1,\dots,f_{n-1})\cap \VV(f_n)$.
Timings for $n=12$ are given in~\Cref{fig:banded-quadrics-timings}.
\begin{figure}
\hspace{7.5em}
$\overbrace{\hspace{10.85em}}^{\text{regeneration}}$ $\overbrace{\hspace{10.25em}}^{\text{$u$-generation}}$\\
\resizebox{0.8\textwidth}{!}{\begin{tabular}{l|c||c|c|c||c|c|c}
$(n,k)$ & $\#$ solutions & $\#$ paths & $t_{\text{prep}}$ & $t_{\text{regen}}$ & $\#$ paths & $t_{u\text{-gen}}$ & $\frac{t_{u\text{-gen}}}{t_{\text{prep}}+t_{\text{regen}}}$ \\[0.4em]
\hline 
$(12, 2)$ &  \num{2048} & \num{3072} & \SI{8.9e-1}{\second} & \SI{3.0}{\second}  & \num{2048} & \SI{3.4}{\second} & \textbf{.87}\\  
$(12,3)$ &  \num{2048} & \num{3072} & \SI{1.3}{\second} &\SI{3.3}{\second}  & \num{2048} & \SI{3.9}{\second}  & \textbf{.87}\\
$(12,4)$ &  \num{2048} & \num{3072} & \SI{1.3}{\second} & \SI{3.7}{\second} & \num{2048} & \SI{4.5}{\second} & \textbf{.90}\\
$(12,5)$ &  \num{2048} & \num{3072} & \SI{1.5}{\second} & \SI{4.0}{\second}  & \num{2048} & \SI{5.0}{\second} & \textbf{.90}\\
$(12,6)$ &  \num{2048} & \num{3072} & \SI{1.8}{\second} & \SI{4.6}{\second} & \num{2048} & \SI{5.6}{\second} & \textbf{.89}\\
$(12,7)$ &  \num{2048} & \num{3072} & \SI{1.9}{\second}  & \SI{4.8}{\second}  & \num{2048}  &  \SI{6.1}{\second} & \textbf{.90} \\ 
$(12,8)$ & \num{2048} &  \num{3072} & \SI{2.2}{\second} & \SI{5.9}{\second} & \num{2048} & \SI{7.0}{\second} & \textbf{.90}\\
$(12,9)$ & \num{2048} &  \num{3072} & \SI{2.5}{\second} & \SI{6.1}{\second} & \num{2048} & \SI{7.7}{\second} & \textbf{.90}
\end{tabular}}
\caption{Timing Algorithm~\ref{algo:intersection} for banded quadrics with the number of variables fixed by $n=12.$
Each system is dehomogenized with a random chart, giving a system in $13$ unknowns.
}\label{fig:banded-quadrics-timings}
\end{figure}

One possible conclusion to draw from this experiment is that we may expect more savings from $u$-generation when the equations involved are more sparse. 
This is plausible for the case of banded quadrics when the value of $k$ is small in comparison to $n,$ in which case the cost of tracking a path in the preparation stage is more comparable to the cost for the paths in the second stage of regeneration.
On the other hand, we still observe in this experiment a steady 10 \% savings as $k$ approaches~$n.$

\subsection{Maximum likelihood estimation for matrices with rank constraints}\label{subsec:MLE}

Consider a probabilistic experiment where we flip two coins.
Each of the coins may be biased, but we model them with the same binary probability distribution.
Let $p_{1 1}$ denote the probability of obtaining two heads, $p_{2 2}$ the probability of two tails, and $p_{1 2} = p_{2 1}$ the probability of one head and one tail in either order.
The individual coin flips are independent if and only if
\begin{equation}
\label{eq:Pmatrix-2}
\rank \begin{bmatrix}
2 p_{1 1} & p_{1 2}\\
p_{1 2} & 2 p_{2 2}
\end{bmatrix} \le 1.
\end{equation}
Similarly, the independence of two identically distributed $n$-ary random variables is modeled by a $n \times n$ symmetric matrix of rank $1.$
More generally, rank constraints
\begin{equation}
\label{eq:Pmatrix-n}
\rank \begin{bsmallmatrix}
2p_{1 n} & p_{12} &\dots & p_{1 n}\\
p_{1 2} & 2p_{22} &  & p_{2n}\\
\vdots &         &\ddots & \vdots \\
p_{1 n} &  p_{2n}      & \dots      & 2p_{nn}
\end{bsmallmatrix} \le r
\end{equation}
must be satisfied for all points in the \demph{symmetric rank-constrained model}.
This statistical model is defined as the set of all points in the probability simplex
\[
\{ (p_{1 1}, \ldots , p_{n n}) \in \mathbb{R}^{(n^2+n)/2} \mid \displaystyle\sum_{1\le i \le j \le n} p_{i j} = 1, \, p_{i j} \ge 0 \} \]
which satisfy the rank constraints~\eqref{eq:Pmatrix-n}.
It is an algebraic relaxation of an associated $r$-th mixture model~\cite{HRS-rank-constraints,KRS15}.

If we are given a $n\times n$ symmetric matrix $U$ counting several samples from this statistical model, it is of interest to recover the underlying model parameters $( p_{i j} )_{1 \le i \le j \le n}.$
A popular approach in statistical inference is \demph{maximum likelihood estimation}.
In our case of interest, this means computing a global maximum of the likelihood function
\begin{equation}
\label{eq:likelihood}
\ell_U (p_{1 1}, \ldots , p_{n n}) \quad = \quad
\frac{\prod_{i\leq j} p_{ij}^{u_{ij}} }{\bigl(\,\sum_{i\leq j} p_{ij}\,\bigr)^{\sum_{i\leq j} u_{ij}}}
\end{equation}
restricted to the symmetric rank-constrained model.
Local optimization heuristics such as the EM algorithm or gradient ascent are popular methods for maximum-likelihood estimation, but are susceptible to local maxima.
For generic $U,$ the number of critical points of $\ell_U$ on the set of complex matrices satisfying~\eqref{eq:Pmatrix-n} is the \demph{ML degree} of the symmetric rank-constrained model.
For statistical models in general, the ML degree is an important measure of complexity and a well-studied quantity in algebraic statistics (see~\cite[Ch.~7]{SullivantBook}).

Previous work of Hauenstein, the third author, and Sturmfels~\cite{HRS-rank-constraints} demonstrates that homotopy continuation methods are a powerful technique for computing the critical points of $\ell_U$.
In~\cite[Theorem 2.3]{HRS-rank-constraints}, a table of ML degrees for the symmetric rank-constrained model for $n\le 6$ was obtained using the software \texttt{Bertini}~\cite{Bertini}.
In that table, ML degrees for $(n,r) \in \{ (6,2), (6,3) \}$ were excluded.
We provide the missing entries of that table in~\Cref{fig:ML-degrees}.

\newcolumntype{x}[1]{>{\centering\let\newline\\\arraybackslash\hspace{0pt}}p{#1}}
\begin{figure}
\[
\begin{array}{c|x{1cm}cccc}r\setminus n & 2 & 3 & 4 & 5 & 6\\ \hline
1 & 1 & 1 & 1 & 1 & 1\\
2 & 1 & 6 & 37 & 270& 2341\\
3 &  & 1 & 37& 1394& \mathbf{68774}\\
4 &  &  & 1& 270&  \mathbf{68774} \\
5 &  &  &  & 1& 2341\\
6 &  &  &  &  & 1
\end{array}
\]
\caption{ML degrees for the symmetric rank-constrained model, for $n\times n$ symmetric matrices of rank $\le r,$ for all $1\le r\le n\le 6$.}\label{fig:ML-degrees}
\end{figure}

The symmetry in each column of the table of ML degrees is explained by a duality result of Draisma and the third author~\cite[Theorem 11]{DR14},  establishing a remarkable bijection between the critical points for the rank-$r$ and rank-$(n-r+1)$ models.
Thus, to complete the table, it suffices to compute the ML degree for $(n,r)=(6,3),$ which we report to be \textbf{68774}.
We managed to verify the ML degrees in this table using several different techniques: equation-by-equation methods, monodromy~\cite{IMA-reference,MARTINDELCAMPO2017559}, and several combinations thereof.
We focus our discussion of computational results on the equation-by-equation methods, which we hope may serve as a useful starting point for future benchmarking studies.

To make computing the ML degrees more amenable to equation-by-equation approach, we use a \demph{symmetric local kernel formulation} of the problem, which is a square polynomial system in $\binom{n+1}{2}$ unknowns~\cite[Eq 2.13]{HRS-rank-constraints}.
The unknowns are the entries of three auxiliary matrices,
\small 
\[
P_1:= 
\begin{bsmallmatrix}
2p_{12} & p_{12} &\dots & p_{1r}\\
p_{12} & 2p_{22} &  & p_{2r}\\
\vdots &         &\ddots & \vdots \\
p_{1r} &  p_{2r}      & \dots      & 2p_{rr}
\end{bsmallmatrix},
\quad
L_1:=
\begin{bsmallmatrix}
\ell_{1,1} & \dots & \ell_{1,r}\\
\vdots & \ddots &\vdots \\
\ell_{n-r,1}&\dots & \ell_{n-r,r}
\end{bsmallmatrix},
\quad
\Lambda :=
\begin{bsmallmatrix}
\lambda_{11} & \lambda_{12} &\dots & \lambda_{1,n-r}\\
\lambda_{12} & \lambda_{22} &  & \lambda_{2,n-r}\\
\vdots &         &\ddots & \vdots \\
\lambda_{1,n-r} &  \lambda_{2,n-r}      & \dots      & \lambda_{r,n-r}
\end{bsmallmatrix}.
\]
\normalsize
The equations which make up this square system are the column sums and the entries above the diagonal in the $n\times n$ symmetric matrix 
\begin{equation}\label{eq:mle-matrix}
\begin{bsmallmatrix}
P_1 & P_1L_1^T\\
L_1P_1 & L_1P_1L_1^T
\end{bsmallmatrix}
 \odot
\begin{bsmallmatrix}
L_1^T\Lambda L_1 & \Lambda L_1\\
L_1\Lambda & \Lambda
\end{bsmallmatrix}
+ 
\sum_{i\leq j} u_{ij} \begin{bsmallmatrix}
P_1 & P_1L_1^T\\
L_1P_1 & L_1P_1L_1^T
\end{bsmallmatrix}
- 2I_n \odot U,
\end{equation}
where $\odot$ denotes the Hadamard product.
There are three natural variable groups given by the auxiliary matrices $P_1$, $L_1$, and $\Lambda$. 
Dropping a single equation $g_1$ from our square system gives equations which vanish on an affine patch of an irreducible curve
\[
X \subset \PP^{r \times r} \times \PP^{(n-r)\times r} \times \PP^{(n-r+1)\times (n-r)/2}.
\]
Slicing in each of the three projective factors, we may compute three sets of witness points $W_{P_1}, W_{L_1}, W_{\Lambda}$ for $X$ using monodromy.
These witness points can be used to compute start points for both regeneration and $u$-generation.
In our experiments, the dropped equation $g_1$ is the $(1,n)$-th entry of the matrix~\eqref{eq:mle-matrix}, which has degree $(1,2,1).$
Thus, there are $\# W_{P_1} + 2 \# W_{L_1} + \# W_{\Lambda_1}$ start points for both methods, and an additional $\# W_{L_1}$ paths must be tracked in the preparation phase for regeneration.
For $u$-generation, the start points were obtained using the heuristic proposed in Algorithm~\ref{algo:bi-hom-start-points} with $\epsilon = 10^{-5}.$
We do not eliminate the three $u$-variables corresponding to each factor---see~\Cref{rem:eliminate-u}.

We use two non-default path-tracker tolerances: a minimum stepsize of $10^{-14}$ by setting \texttt{tStepMin=>1e-14} and a maximum of $2$ Newton iterations for every predictor step by setting \texttt{maxCorrSteps=>2}.
The latter option is more conservative than the default of $\le 3$ Newton steps, which increases the chances of path-jumping.
With fewer corrector steps, a smaller timestep may be needed to track paths successfully.
We note that comparable tolerances are the defaults used in \texttt{Bertini}~\cite[Appendix~E.4.4,~E.4.7]{bertini-book}.

\Cref{fig:known-ML-timings} and~\cref{fig:ML-w-monodromy} show that the ML degrees are significantly smaller than the number of start points.
This implies that many endpoints will lie at the hyperplane at infinity in one of the projective factors.
We declare an endpoint to be finite when its three homogeneous coordinates exceed $10^{-6}$ in magnitude.

\begin{figure}
\hspace{5.95em}
$\overbrace{\hspace{11.1em}}^{\text{regeneration}}$ $\overbrace{\hspace{10.5em}}^{\text{$u$-generation}}$\\
\resizebox{0.8\textwidth}{!}{\begin{tabular}{c|c||c|c|c||c|c|c}
$(n,r)$ & ML degree &
$\# \text{paths}$
&
$t_{\text{prep}}$ & $t_{\text{regen}}$ &  
$\# \text{paths}$ &
 $t_{u\text{-gen}}$ &  $\frac{t_{u\text{-gen}}}{t_{\text{prep}}+t_{\text{regen}}}$\\
\hline 
(3,2) & 6 & 19 & \SI{3e-3}{\second}  & \SI{3e-2}{\second} &  15 &  \SI{5e-2}{\second}  &  \textbf{1.52} \\
(4,2) & 37 & 156 &  \SI{6e-2}{\second}   &   \SI{5e-1}{\second} & 118 &  \SI{6e-1}{\second} & \textbf{1.07} \\
(5,2) & 270 & \num{1313} & \SI{3e-1}{\second} & \SI{6}{\second}   & \num{980} &  \SI{9}{\second} & \textbf{1.43} \\
(5,3) & \num{1394} & \num{6559} & \SI{5}{\second} &  \SI{55}{\second} &  \num{5185}  & \SI{66}{\second} & \textbf{1.10}\\
(6,2) & \num{2341} & \num{9137} & \SI{8}{\second} & \SI{94}{\second} &   \num{12343} & \SI{121}{\second} & \textbf{1.18}
\end{tabular}}
\caption{Timings of regeneration and $u$-generation for computing previously-known ML degrees.}\label{fig:known-ML-timings}
\end{figure}

In~\Cref{fig:known-ML-timings}, we list timings for computing the previously-known ML degrees.
We see that the number of additional paths required by regeneration is rather small in comparison with the ML degree.
Thus, we should not expect to see much of an advantage of $u$-generation over regeneration.
Indeed, although the timings of both methods are competitive, regeneration is the clear winner.
Although $t_{\text{regen}}$ and $t_{u-\text{gen}}$ time the same number of path-tracks, we see that $t_{u\text{-gen}}$ is consistently larger.
One explanation is that $u$-generation uses an extra three variables here, increasing costs associated with repeated function evaluation and numerical linear algebra.
\begin{remark}
The heuristic for computing start points in Algorithm~\ref{algo:bi-hom-start-points} imposes an additional overhead for the multiprojective $u$-generation. This   
routine is implemented in the top-level language of \texttt{Macaulay2} and currently takes a significant portion of the runtime. 
Since this heuristic may be improved in future work, we didn't pursue a low-level implementation,   
which would dramatically reduce the runtime of this routine making it negligible in comparison to the other parts. 
\end{remark}

\begin{figure}
\resizebox{0.8\textwidth}{!}{\begin{tabular}{c|c|c|c|c|c|c|c|c|c}
method & $t_1$ & \# collected  & $t_2$ & \# collected  & $t_3$ & \# collected  & $t_4$ & \# collected & $t_{\text{total}}$\\
\hline
regeneration & \SI{12498}{\second} & \num{68767} & \SI{12666}{\second} & \num{68773} & \SI{20210}{\second} & \num{68773} & \SI{20393}{\second} & \rootcount & \SI{65767}{\second}\\
$u$-generation & \SI{13723}{\second} & \num{68729} & \SI{13900}{\second} & \num{68773} & \SI{25433}{\second} & \rootcount & \SI{13689}{\second} &  \rootcount & \SI{66745}{\second}
\end{tabular}}
\caption{Timings $t_1,t_2,t_3,t_4$ of four runs of regeneration and $u$-generation for computing the ML degree of \textbf{68774} for $(n,r)=(6,3).$}\label{fig:unknown-ML-timings}.
\end{figure}

For the previously-unknown ML degree when $(n,r)=(6,3),$ we conducted an experiment where we ran each of the two equation-by-equation methods four times and used the \emph{union} of all finite endpoints to estimate the ML degree.
The four runs are not identical because the homotopies are randomized using the $\gamma$-trick.
Timings are shown in~\Cref{fig:unknown-ML-timings}.
The variability of the timings is not so surprising, since paths are randomized according to $\gamma$-type tricks and also due to the permissive step-size of $10^{-14}$; if there are many ill-conditioned paths, the adaptively-chosen stepsize for each of them may shrink and remain small throughout path-tracking.
Ultimately, both methods yield the same root count, which $u$-generation attains by the third iteration.
We also note that the total timing for regeneration and $u$-generation turn out to be very close, though this might well be an anomaly.

\begin{figure}
\hspace{5.em}
$\overbrace{\hspace{13.25em}}^{\text{regeneration}}$ $\overbrace{\hspace{10.15em}}^{\text{$u$-generation}}$\\
\resizebox{0.8\textwidth}{!}{\begin{tabular}{c|c||c|c|c|c|c||c|c|c|c}
$(n,r)$ & ML degree &
$\# \text{paths}$
&
$t_{\text{prep}}$ & $t_{\text{regen}}$ & $t_{\text{mon}}$ & $t_{\text{total}}$ &
$\# \text{paths}$ &
 $t_{u\text{-gen}}$ & $t_{\text{mon}}$ & $t_{\text{total}}$\\
\hline 
(5,2) & 270 & \num{1313} & \SI{7e-1}{\second} & \SI{5}{\second} & \SI{9}{\second} & \SI{14}{\second} & \num{980} &  \SI{9}{\second} & \SI{10}{\second} & \SI{19}{\second} \\
(5,3) & \num{1394} & \num{6559} & \SI{4}{\second} & \SI{47}{\second} & \SI{34}{\second} & \SI{85}{\second} & \num{5185} &  \SI{61}{\second} & \SI{52}{\second} & \SI{113}{\second}\\
(6,2) & \num{2341} & \num{9137} & \SI{7}{\second} & \SI{88}{\second} & \SI{124}{\second} & \SI{219}{\second} & \num{12343} & \SI{110}{\second} & \SI{109}{\second} & \SI{219}{\second}\\
(6,3) & \num{68774} & \num{387114}& \SI{1136}{\second} & \SI{11822}{\second} & \SI{6557}{\second} & \SI{19515}{\second} & \num{301675} &  \SI{13883}{\second} & \SI{6509}{\second} & \SI{20392}{\second}
\end{tabular}}
\caption{Regeneration and $u$-generation followed by a monodromy loop.}\label{fig:ML-w-monodromy}
\end{figure}

One might reasonably be concerned that multiple runs of both equation-by-equation methods were needed to compute the ML degree for $(n,r)=(6,3).$
In theory, both are probability-one methods.
In practice, most implementations of homotopy continuation will miss some solutions when presented with a sufficiently difficult problem.
One well-known practical issue is path-jumping, which may lead in some cases to duplicate endpoints.
The existence of many solutions at infinity, which are often highly-singular, presents another obstacle.
This obstacle would be even more of a concern if we were to consider other homotopy continuation methods.
We note, for instance, the number of paths tracked by the celebrated polyhedral homotopy may be prohibitive for the $(n,r)=(6,3)$ case.
Using~\cite{HomotopyContinuationjulia} and~\cite{gfan}, which both implement the mixed volume algorithm described in~\cite{jensen2016tropical}, we determined that the polyhedral root count for the symmetric local kernel formulation is \textbf{27174865}---three orders of magnitude over the ML degree, and two over the number of paths tracked in our experiments.

\Cref{fig:ML-w-monodromy} displays the results of another experiment where we ran each equation-by-equation method \emph{exactly once} and collected any missed solutions using a monodromy loop.
A similar strategy for dealing with failed solutions is outlined in~\cite[Sec.~3.2]{sparse-JSAG}.
This experiment gives us more confidence in the reported root count of \rootcount.
For $(n,r)=(6,3)$ the total runtime compares favorably to~\Cref{fig:unknown-ML-timings}, suggesting another strategy the practitioner may keep in mind.
We point out that such strategies may be of interest in numerical irreducible decomposition, where the \demph{monodromy breakup algorithm}~\cite{monodromy-breakup} is run following the cascade of Algorithm~\ref{algo:cascade}.

We close with a concrete numerical example of maximum-likelihood estimation, where the target system and its solutions resulting from $u$-generation may now serve as the start-system for a parameter homotopy~\cite[Ch.~7]{SW05}.

\begin{example}\label{example:mLE}
Consider the matrix of counts
\[
U = \begin{bsmallmatrix}
       2&4&2&4&4&4\\
       4&2&2&6&6&6\\
       2&2&1&2&2&2\\
       4&6&2&3&6&6\\
       4&6&2&6&3&6\\
       4&6&2&6&6&3\\
       \end{bsmallmatrix}.
\]
Let $P_{\text{emp}}$ denote the empirical $P$-matrix
\[
P_{\text{emp}} = \displaystyle\frac{1}{\sum_{i\leq j} u_{ij}}  \begin{bsmallmatrix}
2 u_{11} & u_{1 2} & \cdots & u_{1 6} \\
u_{1 2} &2 u_{2 2} & \cdots & u_{2 6}\\
\vdots & \vdots & \ddots & \vdots \\
u_{1 6} & u_{2 6} & \cdots & 2u_{6 6}
       \end{bsmallmatrix}
\]
This matrix has rank $4.$
To compute a maximum-likelihood estimate given $U,$ we use a parameter homotopy with \rootcount~start points furnished by $u$-generation.
Among the approximate endpoints of this parameter homotopy are 1082 labeled as failed paths, which have large and non-real coordinates.
Among the successful endpoints, there are 4108 real solutions.
However, only three are statistically valid in the sense that the rank-constrained $P$-matrix has non-negative entries. This matrix may be recovered with the formula
\[
P = \begin{bsmallmatrix}
P_1 & P_1L_1^T\\
L_1P_1 & L_1P_1L_1^T
\end{bsmallmatrix}.
\]
We list the $P$-matrices and likelihood values for the three statistically valid critical points, along with the empirical $P$-matrix (which does not lie on the rank-constrained model):
\begin{align*}
P^{(1)} &\approx \left(\begin{smallmatrix}
       {.046}&{.053}&{.03}&{.054}&{.054}&{.054}\\
       {.053}&{.053}&{.026}&{.079}&{.079}&{.079}\\
       {.030}&{.026}&{.024}&{.026}&{.026}&{.026}\\
       {.054}&{.079}&{.026}&{.079}&{.079}&{.079}\\
       {.054}&{.079}&{.026}&{.079}&{.079}&{.079}\\
       {.054}&{.079}&{.026}&{.079}&{.079}&{.079}\\
       \end{smallmatrix}\right), & \log \ell_U (P^{(1)}) = -223.264,\\
P^{(2)} &\approx \left(\begin{smallmatrix}
       {.053}&{.051}&{.026}&{.053}&{.053}&{.053}\\
       {.051}&{.069}&{.026}&{.074}&{.074}&{.074}\\
       {.026}&{.026}&{.026}&{.026}&{.026}&{.026}\\
       {.053}&{.074}&{.026}&{.080}&{.080}&{.080}\\
       {.053}&{.074}&{.026}&{.080}&{.080}&{.080}\\
       {.053}&{.074}&{.026}&{.080}&{.080}&{.080}\\
       \end{smallmatrix}\right), & \log \ell_U (P^{(2)}) =-222.979,\\
P^{(3)} &\approx        \left(\begin{smallmatrix}
       {.048}&{.054}&{.020}&{.056}&{.056}&{.056}\\
       {.054}&{.052}&{.029}&{.077}&{.077}&{.077}\\
       {.020}&{.029}&{.014}&{.032}&{.032}&{.032}\\
       {.056}&{.077}&{.032}&{.077}&{.077}&{.077}\\
       {.056}&{.077}&{.032}&{.077}&{.077}&{.077}\\
       {.056}&{.077}&{.032}&{.077}&{.077}&{.077}\\
       \end{smallmatrix}\right), & \log \ell_U (P^{(3)}) = -222.901,\\
P_{\text{emp}}
&\approx 
\left(\begin{smallmatrix}
       {.053}&{.053}&{.026}&{.053}&{.053}&{.053}\\
       {.053}&{.053}&{.026}&{.079}&{.079}&{.079}\\
       {.026}&{.026}&{.026}&{.026}&{.026}&{.026}\\
       {.053}&{.079}&{.026}&{.079}&{.079}&{.079}\\
       {.053}&{.079}&{.026}&{.079}&{.079}&{.079}\\
       {.053}&{.079}&{.026}&{.079}&{.079}&{.079}\\
       \end{smallmatrix}\right) ,
       & \log \ell_U (P_{\text{e}}) = -222.860.
      \end{align*}
The matrix $P^{(3)}$ is a good candidate for the maximum likelihood estimate.
\end{example}

\section{Conclusion}\label{sec:conclusion}

We propose $u$-generation as a novel equation-by-equation homotopy continuation method for solving polynomial systems.
The main theoretical merits of this method are that it works in both projective and multiprojective settings and requires tracking fewer total paths than regeneration.
Furthermore, our setup is easily implementable.
Computational experiments show that the method is promising and can be used to solve nontrivial problems, although we do not observe a decisive advantage of $u$-generation over regeneration.
More specifically, our proof-of-concept implementation achieves modest, but notable, savings in terms of computational time on several examples.
Furthermore, we show that multiprojective $u$-generation is a viable method for solving structured polynomial systems arising in maximum-likelihood estimation, including one previously-unsolved case.

As a future direction, we point out that more thorough experimentation with heuristics such as Algorithm~\ref{algo:bi-hom-start-points} might lead to a more robust implementation of $u$-generation.
Further exploration in both projective and multiprojective settings is warranted, particularly in the context of numerical irreducible decomposition, where equation-by-equation methods play a vital role.
\bibliographystyle{abbrv}
\bibliography{bib}

\end{document}